\documentclass[10pt]{amsart}

\usepackage[T1]{fontenc}
\usepackage{lmodern}
\usepackage{microtype}
\usepackage{xcolor}
\usepackage{amsmath,amssymb,mathtools}
\usepackage{enumitem}
\usepackage[hidelinks]{hyperref}

\allowdisplaybreaks
\setlist[enumerate,1]{leftmargin=2.4em,label=(\roman*),itemsep=0.35em,topsep=0.35em}
\setlist[itemize]{leftmargin=2em,itemsep=0.25em,topsep=0.35em}

\makeatletter
\def\ps@headings{\ps@empty
  \def\@evenhead{%
    \setTrue{runhead}%
    \normalfont\scriptsize\rlap{\thepage}\hfil}%
  \def\@oddhead{%
    \setTrue{runhead}%
    \normalfont\scriptsize\hfil\llap{\thepage}}%
  \let\@mkboth\markboth
}
\makeatother
\newtheorem{theorem}{Theorem}[section]
\newtheorem{proposition}[theorem]{Proposition}
\newtheorem{lemma}[theorem]{Lemma}
\newtheorem{corollary}[theorem]{Corollary}
\theoremstyle{definition}
\newtheorem{definition}[theorem]{Definition}

\theoremstyle{remark}
\newtheorem{remark}[theorem]{Remark}
\newtheorem{warning}[theorem]{Warning}

\newcommand{\GL}{\operatorname{GL}}
\newcommand{\Vect}{\operatorname{Vect}}
\newcommand{\Fun}{\operatorname{Fun}}
\newcommand{\Aut}{\operatorname{Aut}}
\newcommand{\Nat}{\operatorname{Nat}}

\newcommand{\Pic}{\operatorname{Pic}}

\newcommand{\Tr}{\operatorname{Tr}}
\newcommand{\id}{\mathrm{id}}
\newcommand{\disc}{\mathrm{disc}}
\newcommand{\uSm}{u\text{-}\mathrm{sm}}

\newcommand{\KM}{K^{\mathrm M}}

\newcommand{\Fq}{\mathbb F_q}
\newcommand{\calI}{\mathcal I}
\newcommand{\calV}{\mathcal V}

\newcommand{\calL}{\mathcal L}
\newcommand{\calO}{\mathcal O}
\newcommand{\calC}{\mathcal C}

\newcommand{\op}{\mathrm{op}}
\newcommand{\fd}{\mathrm{fd}}

\newcommand{\ex}{\mathrm{ex}}

\newcommand{\PreHilb}{\operatorname{PreHilb}}

\begin{document}
\title{The  principal series 2-representation for $\GL_n\times\GL_n$ over a 2-dimensional local field}
	
\author{Xuecai Ma}
	%    Address of record for the research reported here
\address{
Westlake Institute for Advanced Study, Hangzhou, Zhejiang 310024, P.R. China \\
\hspace*{1em} Institute for Theoretical Sciences, Westlake University, Hangzhou, Zhejiang 310030, P.R. China
}
\email{maxuecai@westlake.edu.cn}

\author{Jingwen Zhu}
\address{
 Westlake University, Hangzhou, Zhejiang 310030, P.R. China
}

\email{zhujingwen@westlake.edu.cn}

\subjclass[2020]{11S70, 18D05, 19D45, 20G25}
\keywords{two-dimensional local field, Milnor $K_2$, coherent 2-group, 2-representation, Langlands correspondence, twisted equivariantization, categorical trace}

\begin{abstract}
Let $F=\Fq((u))((t))$ and
$H=\GL_n(F)_L\times\GL_n(F)_R$.  An ordered tuple $\alpha=(\alpha_1,\dots, \alpha_n) $ defines a
cross-$K_2$ multiplier on the doubled Borel.  Twisted equivariantization then
yields an exact uniformly smooth categorical principal series
$2$-representation of $H$.   For unitary parameters,  the $\mathbb R((X))$-measure 
 gives a positive Hom-valued formal Hermitian inner product on the
spherical part.
These constructions provide two-dimensional categorical analogues of selected
unramified structures.
\end{abstract}

\maketitle
\tableofcontents

\section{Introduction}

Let $K=k((u))$ be a one-dimensional local field with finite residue field.  In the unramified local Langlands correspondence for
$\GL_n$, an unramified Frobenius-semisimple Weil--Deligne parameter
\[
(\phi,N),\qquad
\phi:W_{K}\longrightarrow \GL_n(\mathbb C),
\qquad \phi|_{I_{K}}=1,\quad N=0,
\]
corresponds to an irreducible unramified smooth representation $\pi_\phi$ of
$\GL_n(K)$.  With a fixed Frobenius and normalized Satake convention, both sides
are encoded by the semisimple conjugacy class
$s_\phi=\phi(\operatorname{Frob})$ in the dual group
$\widehat {\GL_n}=\GL_n(\mathbb C)$, equivalently by its unordered eigenvalues.
The spherical line $\pi_\phi^{\GL_n(\mathcal O_K)}$ is one-dimensional, the commutative
spherical Hecke algebra $\mathcal H(\GL_n(K), \GL_n(\mathcal O_K))$ acts on it through the Satake
character determined by $s_\phi$, and the standard unramified factor is
$\det(1-Ts_\phi)^{-1}$.  Thus the classical picture ties together a Weil
parameter, a smooth representation, a spherical fixed line, a Hecke
eigencharacter, and a scalar local factor
\cite{bump1998automorphic,getz2024introduction}.

Several complementary programs motivate a two-dimensional extension of this
picture.  Kapranov proposed that the automorphic objects in
such a higher-dimensional theory should be categorified, replacing ordinary
representation spaces by linear categories and group representations by higher
representations \cite{kapranov1995analogies}.  Parshin's higher local class field theory and higher-adelic methods
provide an abelian $K$-theoretic and adelic framework for fields and schemes of
dimension two; his later formulation of a two-dimensional Langlands program
emphasizes its relation to the classical correspondence through direct images
\cite{parshin2012questions}.  In the unramified local setting,
Osipov made part of this philosophy concrete by constructing a categorical
analogue of unramified principal series for groups over a two-dimensional local
field, using central extensions and associated noncommutative reciprocity laws
\cite{osipov2013unramified}. The present paper asks for a categorical analogue of selected parts of the unramified local Langlands  picture
over the two-dimensional local field $F$.  Following Kapranov's idea, our representation takes values   in 2-vector spaces. Our  corresponding group is $\GL_n(F)  \times  \GL_n(F)$,   whereas Kapranov's uses $\GL_{2n}(F)$.   Another important distinction is that Osipov defines
his congruence filtration using the rank-one discrete valuation ring
of the two-dimensional local field $F$, whereas the present   
construction uses the rank-two valuation ring.

Let $\Lambda=\mathbb C((X))$, with $X$ a formal variable. We set
$G=\GL_n(F)$, $H=G_L\times G_R$, and
$B_H=B_L\times B_R$. Our use of   $\GL_n \times \GL_n$ is motivated by Fesenko's zeta integral for two-dimensional local fields. One can see \cite{fesenko2003analysis,fesenko2008adelic,fesenko2010analysis} for more details.   The left and right factors supply the two diagonal
inputs used in the cross-$K_2$ symbol.  For an ordered tuple
$\alpha=(\alpha_1,\ldots,\alpha_n)\in(\Lambda^\times)^n$, the two-step degree
$\nu_F=v_u\circ\partial_t$ defines the normalized multiplier
\[
\sigma_\alpha(b,b')
=
\prod_{r=1}^n
\alpha_r^{-\nu_F\{x_r(b),y_r(b')\}},
\qquad b,b'\in B_H.
\]
For a rank-two congruence level $C$, put $S_C=H/C$, and write
$\calO_2=k[[u]]+t\,k((u))[[t]]$ and
$H_0=\GL_n(\calO_2)_L\times\GL_n(\calO_2)_R$. Using $\sigma_{\alpha }$,  we construct a  category $\mathcal I^{\uSm}_F(\alpha )$,   which admits an action of $H$.  This can be thought  of as the two-dimensional analogue of principal series representations.
  Accordingly, the categorical action is constructed
by twisted equivariantization and filtered pseudocolimits, while
higher-local-field integration enters separately on its spherical part.

The roles of the principal objects may be compared as follows.  The table isS
a guide to the construction, not a correspondence between its two columns.
\begin{center}
\small
\renewcommand{\arraystretch}{1.18}
\begin{tabular}[htbp]{@{}p{0.20\textwidth}p{0.30\textwidth}p{0.38\textwidth}@{}}
\hline
Object & Classical one-dimensional theory & Present two-dimensional model\\
\hline
Local field
& $K=k((u))$
& $F=k((u))((t))$ with two ordered valuation directions\\
Acting group
& $\GL_n(K)$
& $H=\GL_n(F)_L\times\GL_n(F)_R$\\
Parameter
& Semisimple class $s_\phi$, or unordered Frobenius eigenvalues
& Ordered tuple $\alpha$ entering the cross-$K_2$ multiplier $\sigma_\alpha$;
  no Weil parameter is constructed\\
Representation
& Smooth representation $\pi_\phi$ on a vector space
& Abelian category $\calI_F^{\uSm}(\alpha)$ with an exact uniformly smooth
  $2$-representation of $H_{\disc}$\\
Spherical object
& The fixed line $\pi_\phi^{\GL_n(\mathcal O_K)}$
& A spherical object $v_\alpha$ with coherent right $H_0$-equivariance\\
Algebraic model
& Spherical Hecke algebra $\mathcal H(
\GL_n(K),  \GL_n(\mathcal O_K))$ and the Satake isomorphism
& Twisted $B_H$-equivariantization of
  $\Fun(S_C,\Vect_\Lambda)$ at level $C$; no Satake isomorphism is asserted\\

Inner product
& Haar measure for spherical convolution and, in the unitary case, a scalar
  invariant Hermitian pairing
& Fesenko--Morrow integration and a Hom-valued formal Hermitian pairing on the
  spherical part\\
\hline
\end{tabular}
\end{center}

The multiplication lines $L_\alpha(b)$ supply the composition constraints of a weak
$(B_H)_{\disc}$-action on each split category
$\Fun(H/C,\Vect_\Lambda)$.  Twisted $B_H$-equivariantization gives the
fixed-level categorical principal series.  We equip $H$ with the Hausdorff
subgroup topology generated by finite intersections of conjugates of
rank-two congruence subgroups and form an explicit filtered pseudocolimit.
The resulting category $\calI_F^{\uSm}(\alpha)$ is abelian, and strict right
translation defines an exact uniformly smooth $2$-representation of
$H_{\disc}$.  The construction also produces a spherical object.  After a
model of elementary $2$-Tate objects and a compatible graded determinant
extension have been fixed, precomposition with the determinant $2$-group
projection gives a formal inflation whose Picard kernel acts coherently
trivially.

For $h$ normalizing a fixed level $C$, right translation is an exact
endofunctor $R_h^C$ of $\calI_C(\alpha)$.  We use the intrinsic fixed-level
categorical trace
\[
\Tr_{\mathrm{cat}}(R_h^C)
=\Nat\bigl(\id_{\calI_C(\alpha)},R_h^C\bigr).
\]

For the spherical part, using the coefficientwise involution on $\Lambda$ and
unitary parameters, we use Fesenko's
integration on $F$ and Morrow's bi-invariant integral on $\GL_n(F)$ to
construct a positive-definite Hom-valued formal inner product.  We compute
the general-rank congruence and Borel-congruence volumes, prove isometry under
level refinement, and obtain a unitary action of the integral subgroup
$H_0$.  For $n=2$, the volume formula specializes to Waller's normalization.

Sections~2 and~3 construct the cross-$K_2$ multiplier and its weak categorical
action.  Sections~4 and~5 construct the uniformly smooth pseudocolimit, its
right action, the formal determinant inflation, and the spherical object.
The final section records the fixed-level categorical trace and establishes
the general-rank Fesenko--Morrow spherical inner product and its specialization
to $n=2$.
The construction is motivated by higher local fields, gerbal
representations, and proposed two-dimensional unramified Langlands theories
\cite{fesenko2000invitation,frenkel2012gerbal,osipov2013unramified}, but it
does not prove a local Langlands correspondence, a Satake isomorphism, a
Hecke-eigenobject theorem, or a nontrivial determinant-kernel action.  The
categorical, Tate-theoretic, and integration-theoretic ingredients are
drawn from
\cite{baez2004higher,braunling2016tate,braunling2018index,
ganter2008representation,ganter2015inner,
fesenko2003analysis,morrow2008integration,morrow2010integration,
waller2019measure,kato1979generalization}.

\textbf{Acknowledgements:}
The authors thank Ivan Fesenko for his support and discussions on higher local fields. This work was supported by Westlake University grants 207010146022301 and 207010145609901, and by the Hangzhou Postdoctoral Daily Funding program.

\section{The two-step Milnor degree and the doubled Borel multiplier}

\subsection{The field and its rank-two valuation}
We  let $k=\Fq, E=k((u)), F=E((t)) $, and let  the rank-one and rank-two rings  be
$\calO_1=E[[t]],  \calO_2=k[[u]]+tE[[t]]$. 
Let $v_t:F^\times\to\mathbb Z$ and $v_u:E^\times\to\mathbb Z$ be normalized valuations.  We equip $\mathbb Z^2$ with the $t$-first lexicographic order: for $(a,b)\in\mathbb Z^2$,
\[
(a,b)\ge (0,0)
\quad\Longleftrightarrow\quad
b>0\ \text{or}\ (b=0\ \text{and}\ a\ge0).
\]
For $x\in F^\times$, write
\[
v(x)=\left(v_u\!\left(\overline{x\,t^{-v_t(x)}}\right),\,v_t(x)\right)\in\mathbb Z^2,
\]
where the bar denotes reduction modulo $t$ of the $t$-adic unit $x\,t^{-v_t(x)}$. With the order just specified,
\[
\calO_2=\{0\}\cup\{x\in F^\times:v(x)\ge(0,0)\}.
\]

For $a,b \in F^{\times}$, the tame boundary is
\[
\partial_t\{a,b\}
=
(-1)^{v_t(a)v_t(b)}  \overline{a^{v_t(b)}b^{-v_t(a)}}
\in E^\times.
\]
Define
\[
\nu_F=v_u\circ\partial_t:\KM_2(F)\longrightarrow\mathbb Z.
\]
Our orientation is $\nu_F\{u,t\}=1$.

\begin{lemma}[Determinant formula]\label{lem:detformula}
For $a,b,c,d\in\mathbb Z$,
\[
\nu_F\{u^at^b,u^ct^d\}=ad-bc.
\]
The pairing
\[
F^\times\times F^\times\longrightarrow\mathbb Z,\qquad
(x,y)\longmapsto\nu_F\{x,y\}
\]
is biadditive and alternating.
\end{lemma}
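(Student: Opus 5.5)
Both claims come from the explicit formula for $\partial_t$ on symbols together with the standard fact that the tame symbol is a homomorphism on $\KM_2(F)$.

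\textbf{Determinant formula.} Substitute $a_0=u^at^b$ and $b_0=u^ct^d$ into the definition. We have $v_t(a_0)=b$ and $v_t(b_0)=d$, so
\[
\partial_t\{u^at^b,u^ct^d\}=(-1)^{bd}\,\overline{(u^at^b)^{d}(u^ct^d)^{-b}}=(-1)^{bd}\,\overline{u^{ad-bc}}=(-1)^{bd}u^{ad-bc}\in E^\times .
\]
The powers of $t$ cancel exactly, so the bar is applied to a monomial in $u$. Since $v_u(-1)=0$ (because $-1\in k^\times$), applying $v_u$ gives $ad-bc$. As a check, $(a,b,c,d)=(1,0,0,1)$ recovers the stated orientation $\nu_F\{u,t\}=1$.

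\textbf{Biadditivity.} I will not verify bimultiplicativity of the explicit formula by hand. Instead I use that $\partial_t:\KM_2(F)\to \KM_1(E)=E^\times$ is the tame symbol, a group homomorphism that is well defined on Milnor $K_2$, i.e.\ it kills Steinberg relations. This is standard and can be cited, e.g.\ from Fesenko--Vostokov or Milnor. Since $\{x,y\}$ is bimultiplicative in $K_2$ and $v_u$ is a homomorphism, the composite $(x,y)\mapsto v_u\partial_t\{x,y\}$ is biadditive. Alternatively, one can check directly from the formula that $(x,y)\mapsto(-1)^{v_t(x)v_t(y)}\overline{x^{v_t(y)}y^{-v_t(x)}}$ is multiplicative in each variable. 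In the first variable the sign factor is multiplicative because $v_t(xx')v_t(y)=v_t(x)v_t(y)+v_t(x')v_t(y)$. The reduction is multiplicative on $t$-adic units, and $x^{v_t(y)}y^{-v_t(x)}$ is a $t$-adic unit.

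\textbf{Alternation.} I need $\nu_F\{x,x\}=0$. Directly from the formula, $\partial_t\{x,x\}=(-1)^{v_t(x)^2}\overline{x^{v_t(x)}x^{-v_t(x)}}=(-1)^{v_t(x)}$. This lies in $k^\times$, so its $u$-valuation is $0$. Alternatively, $\{x,x\}=\{x,-1\}$ in $K_2$, and $\nu_F\{x,-1\}=v_u\bigl((-1)^{v_t(x)}\bigr)=0$. Combined with biadditivity, alternation also gives skew-symmetry.

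The only step needing care is the well-definedness and bimultiplicativity of the tame symbol, and I will cite it. The direct formula check is routine, so I expect no real obstacle. I should also note that "biadditive" is meant for the pairing on $F^\times\times F^\times$ written multiplicatively, with values in $\mathbb Z$.
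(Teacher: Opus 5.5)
Your proposal is correct and follows the paper's proof essentially step for step. It uses the same monomial computation $\partial_t\{u^at^b,u^ct^d\}=(-1)^{bd}u^{ad-bc}$, obtains biadditivity from bilinearity of the Milnor symbol and the tame boundary, and obtains alternation from the fact that $\partial_t\{x,x\}=(-1)^{v_t(x)^2}$ has $u$-valuation zero; your direct check of bimultiplicativity and the route via $\{x,x\}=\{x,-1\}$ are harmless supplements the paper does not spell out.
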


\begin{proof}
The tame boundary is
\[
\partial_t\{u^at^b,u^ct^d\}=(-1)^{bd}u^{ad-bc}.
\]
The sign has $u$-valuation zero, so applying $v_u$ gives the formula. Additivity follows from bilinearity of the Milnor symbol and the boundary map. For every $x\in F^\times$,
\[
\partial_t\{x,x\}=(-1)^{v_t(x)^2},
\]
whose $u$-valuation is zero. Hence $\nu_F\{x,x\}=0$, and the bilinear pairing $\nu_F\{\cdot,\cdot\}$ is alternating.
\end{proof}

Thus
\[
\nu_F=v_u\circ\partial_t:\KM_2(F)\longrightarrow\mathbb Z
\]
is a surjective homomorphism, since $\nu_F\{u,t\}=1$. For every $\beta\in\Lambda^\times$, the formula
\[
\chi_\beta(\xi)=\beta^{\nu_F(\xi)}
\]
defines a group character $\KM_2(F)\to\Lambda^\times$. No topology on $\KM_2(F)$ is used below.

\subsection{The doubled Borel cocycle}
Let
\[
G=\GL_n(F),\qquad H=G_L\times G_R.
\]
Let $B\subset G$ be the upper triangular Borel and set $B_H=B_L\times B_R$. For $b=(b_L,b_R)\in B_H$, define
\[
x_r(b)=(b_L)_{rr},\qquad y_r(b)=(b_R)_{rr}.
\]
Fix an ordered tuple
\[
\alpha=(\alpha_1,\ldots,\alpha_n)\in(\Lambda^\times)^n.
\]

\begin{definition}
The cross-$K_2$ multiplier attached to $\alpha$ is the following function on the underlying abstract group of $B_H$:
\[
\sigma_\alpha(b,b')=
\prod_{r=1}^n\alpha_r^{-\nu_F\{x_r(b),y_r(b')\}},
\qquad b,b'\in B_H.
\]
\end{definition}

\begin{proposition}
The function $\sigma_\alpha$ is a normalized scalar 2-cocycle on $B_H$:
\[
\sigma_\alpha(bb',b'')\sigma_\alpha(b,b')
=
\sigma_\alpha(b,b'b'')\sigma_\alpha(b',b''),
\]
and $\sigma_\alpha(1,b)=\sigma_\alpha(b,1)=1$.
\end{proposition}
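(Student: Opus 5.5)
The plan is to reduce the cocycle identity to bilinearity of the pairing in Lemma~\ref{lem:detformula}, together with multiplicativity of diagonal entries on the Borel. First I would record that for upper triangular matrices the diagonal entries are multiplicative, $(b_Lb'_L)_{rr}=(b_L)_{rr}(b'_L)_{rr}$, and likewise on the right. Hence each $x_r,y_r:B_H\to F^\times$ is a group homomorphism, and $x_r(1)=y_r(1)=1$.

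Next I would use that, by Lemma~\ref{lem:detformula}, $(x,y)\mapsto\nu_F\{x,y\}$ is biadditive on $F^\times\times F^\times$. For each $r$, set
\[
c_r(b,b')=\nu_F\{x_r(b),y_r(b')\}.
\]
Then $c_r:B_H\times B_H\to\mathbb Z$ is biadditive, being the composite of a biadditive pairing with homomorphisms in each variable. Any biadditive function $c$ on a group satisfies the additive cocycle identity, since both sides equal $c(b,b')+c(b,b'')+c(b',b'')$:
\[
c(bb',b'')+c(b,b')=c(b,b'b'')+c(b',b'').
\]
Since $\beta\mapsto\alpha_r^{-\beta}$ converts sums to products in the commutative group $\Lambda^\times$, each factor $\alpha_r^{-c_r}$ is a multiplicative 2-cocycle. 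A finite product of 2-cocycles with values in the abelian group $\Lambda^\times$ is again a 2-cocycle, so $\sigma_\alpha$ is one.

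For normalization, $x_r(1)=y_r(1)=1$ and $\{1,y\}=\{x,1\}=0$ in $\KM_2(F)$. Equivalently, biadditivity forces $\nu_F\{1,y\}=0$. So every exponent vanishes and $\sigma_\alpha(1,b)=\sigma_\alpha(b,1)=1$.

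No step is a real obstacle. The only point needing care is the source of biadditivity: it comes from the Steinberg symbol being bimultiplicative and $\partial_t$, $v_u$ being homomorphisms, and it holds for arbitrary elements of $F^\times$, not just monomials $u^at^b$. I would cite the biadditivity clause of Lemma~\ref{lem:detformula} rather than the determinant formula. The alternating property is not needed here.
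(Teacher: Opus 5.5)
Your proposal is correct and follows essentially the same route as the paper. Both arguments use multiplicativity of the diagonal coordinates on the Borel and biadditivity of $\nu_F\{\cdot,\cdot\}$ to get the additive cocycle identity for each exponent, then exponentiate and multiply over $r$, with normalization coming from $\{1,z\}=\{z,1\}=0$.
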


\begin{proof}
Diagonal coordinates are multiplicative on a Borel subgroup. For each $r$, bilinearity gives
\[
\nu_F\{x_r(bb'),y_r(b'')\}
=
\nu_F\{x_r(b),y_r(b'')\}
+
\nu_F\{x_r(b'),y_r(b'')\}
\]
and
\[
\nu_F\{x_r(b),y_r(b'b'')\}
=
\nu_F\{x_r(b),y_r(b')\}
+
\nu_F\{x_r(b),y_r(b'')\}.
\]
The cocycle identity follows after exponentiating and multiplying over $r$. Normalization follows from $\{1,z\}=\{z,1\}=0$.
\end{proof}

For each $b\in B_H$, define the standard line
\[
L_\alpha(b)=\Lambda e_b.
\]
The multiplication isomorphisms
\[
m_{b,b'}:L_\alpha(b)\otimes L_\alpha(b')\xrightarrow{\sim}L_\alpha(bb'),
\qquad
m_{b,b'}(e_b\otimes e_{b'})
=\sigma_\alpha(b,b')e_{bb'},
\]
form a multiplicative line bundle, or equivalently the line form of the ordinary central extension defined by $\sigma_\alpha$.  The maps $m_{b,b'}$, rather than the scalars alone, will be the composition constraints of the weak categorical action; $\sigma_\alpha(b,b')$ is their coordinate in the displayed standard bases.

For commuting $b,c\in B_H$, define the commutator
\[
\kappa_\alpha(b,c)=\frac{\sigma_\alpha(c,b)}{\sigma_\alpha(b,c)}.
\]
This convention is the inverse of the other common convention $\sigma(b,c)/\sigma(c,b)$. It depends only on the cohomology class of the multiplier.

For tuples
\[
\lambda_L=((a_1,b_1),\ldots,(a_n,b_n)),\qquad
\lambda_R=((c_1,d_1),\ldots,(c_n,d_n))
\]
in $(\mathbb Z^2)^n$, put
\[
\ell_{\lambda_L}
=
\left(\operatorname{diag}(u^{a_1}t^{b_1},\ldots,u^{a_n}t^{b_n}),I_n\right),
\]
\[
r_{\lambda_R}
=
\left(I_n,\operatorname{diag}(u^{c_1}t^{d_1},\ldots,u^{c_n}t^{d_n})\right).
\]
These elements commute.

\begin{proposition}[Paired commutator formula]\label{prop:paired}
One has
\[
\kappa_\alpha(\ell_{\lambda_L},r_{\lambda_R})
=
\prod_{r=1}^n\alpha_r^{a_rd_r-b_rc_r}.
\]
\end{proposition}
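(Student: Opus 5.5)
Since $\kappa_\alpha(b,c)=\sigma_\alpha(c,b)/\sigma_\alpha(b,c)$, we compute the two values $\sigma_\alpha(r_{\lambda_R},\ell_{\lambda_L})$ and $\sigma_\alpha(\ell_{\lambda_L},r_{\lambda_R})$ directly from the definition and take their quotient. We first read off the diagonal coordinates:
\[
x_r(\ell_{\lambda_L})=u^{a_r}t^{b_r},\quad y_r(\ell_{\lambda_L})=1,\qquad
x_r(r_{\lambda_R})=1,\quad y_r(r_{\lambda_R})=u^{c_r}t^{d_r}.
\]

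For the denominator we have
\[
\sigma_\alpha(\ell_{\lambda_L},r_{\lambda_R})=\prod_{r=1}^n\alpha_r^{-\nu_F\{u^{a_r}t^{b_r},\,u^{c_r}t^{d_r}\}}.
\]
By Lemma~\ref{lem:detformula} each exponent is $-(a_rd_r-b_rc_r)$. For the numerator,
\[
\sigma_\alpha(r_{\lambda_R},\ell_{\lambda_L})=\prod_{r=1}^n\alpha_r^{-\nu_F\{1,1\}}=1,
\]
because the symbol $\{1,1\}$ vanishes; this is the same normalization fact $\{1,z\}=0$ already used for the cocycle. The quotient is therefore $\prod_r\alpha_r^{a_rd_r-b_rc_r}$, which is the claimed formula.

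The only point needing care is the orientation and sign bookkeeping. The convention for $\kappa_\alpha$ is inverse to the common one, and the multiplier carries a minus sign in its exponent; these two inversions cancel, leaving the positive exponent $a_rd_r-b_rc_r$. We must also confirm that Lemma~\ref{lem:detformula} is applied with $x=u^{a_r}t^{b_r}$ in the first slot, so that it gives $ad-bc$ rather than $bc-ad$.

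Finally, $\kappa_\alpha$ is only defined for commuting pairs, and $\ell_{\lambda_L}$ and $r_{\lambda_R}$ do commute, since they lie in different factors of $H$.
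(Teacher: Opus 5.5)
Your proposal is correct and matches the paper's proof: the reversed-order multiplier $\sigma_\alpha(r_{\lambda_R},\ell_{\lambda_L})$ is $1$ because the relevant diagonal coordinates are $1$, and the denominator $\sigma_\alpha(\ell_{\lambda_L},r_{\lambda_R})=\prod_r\alpha_r^{-(a_rd_r-b_rc_r)}$ comes from Lemma~\ref{lem:detformula}. Taking the ratio gives the formula, and your checks of the sign and slot order are accurate.
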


\begin{proof}
The term $\sigma_\alpha(r_{\lambda_R},\ell_{\lambda_L})$ is $1$ because the left diagonal coordinates of the first argument are all $1$. By Lemma~\ref{lem:detformula},
\[
\sigma_\alpha(\ell_{\lambda_L},r_{\lambda_R})
=
\prod_{r=1}^n\alpha_r^{-(a_rd_r-b_rc_r)}.
\]
Taking the ratio proves the formula.
\end{proof}

\begin{proposition}[Gauge invariance]
Let $q:B_H\to\Lambda^\times$ be a normalized 1-cochain and let
\[
\sigma_\alpha^q(b,b')=q(b)q(b')q(bb')^{-1}\sigma_\alpha(b,b').
\]
For commuting $b,c$, the commutator obtained from $\sigma_\alpha^q$ equals $\kappa_\alpha(b,c)$.
\end{proposition}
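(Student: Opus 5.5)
The plan is a direct computation: substitute the definition of $\sigma_\alpha^q$ into the commutator ratio and watch the coboundary factors cancel. This uses only commutativity of $b$ and $c$ and commutativity of $\Lambda^\times$.

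First, write out the commutator attached to the gauged multiplier for commuting $b,c\in B_H$:
\[
\frac{\sigma_\alpha^q(c,b)}{\sigma_\alpha^q(b,c)}
=
\frac{q(c)q(b)q(cb)^{-1}\sigma_\alpha(c,b)}{q(b)q(c)q(bc)^{-1}\sigma_\alpha(b,c)}.
\]
Second, note that $q(b)q(c)=q(c)q(b)$ because $\Lambda^\times$ is abelian. Third, use the hypothesis $bc=cb$ to get $q(cb)=q(bc)$. The prefactors therefore cancel identically, and the ratio reduces to $\sigma_\alpha(c,b)/\sigma_\alpha(b,c)=\kappa_\alpha(b,c)$. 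Normalization of $q$ is not needed for this identity. It only guarantees that $\sigma_\alpha^q$ remains a normalized cocycle: $\sigma_\alpha^q(1,b)=q(1)\sigma_\alpha(1,b)=1$.

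I should also confirm that $\sigma_\alpha^q$ is again a 2-cocycle, so that the phrase ``the commutator obtained from $\sigma_\alpha^q$'' refers to a cocycle in the same cohomology class as $\sigma_\alpha$. This is the standard fact that multiplying by the coboundary $\delta q(b,b')=q(b)q(b')q(bb')^{-1}$ preserves the cocycle identity. It can be checked by expanding both sides of the identity in the earlier Proposition, which is immediate since the coefficients are abelian.

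There is no real obstacle. The only point to watch is the place where commutativity of $b$ and $c$ is used: without it, $q(bc)\neq q(cb)$ in general and the commutator would not be gauge invariant. This is exactly why $\kappa_\alpha$ is defined only on commuting pairs. As a consequence, the paired commutator formula of Proposition~\ref{prop:paired} depends only on the cohomology class of $\sigma_\alpha$.
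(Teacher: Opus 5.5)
Your proposal is correct and matches the paper's proof. Both arguments observe that, once $bc=cb$ and $\Lambda^\times$ is abelian, the coboundary factor $q(b)q(c)q(bc)^{-1}$ is the same in both orders and cancels in the ratio defining $\kappa_\alpha$; your side remarks about normalization of $q$ and the cocycle property of $\sigma_\alpha^q$ are accurate but not needed for the identity itself.
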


\begin{proof}
The coboundary factors in the two orders are identical because $bc=cb$ and therefore cancel in the quotient.
\end{proof}

\section{Coherent 2-groups, weak actions, and equivariantization}

\subsection{Coherent 2-groups and 2-representations}
In this subsection we briefly review coherent 2-groups and categorical actions. See~\cite{baez2004higher,baez2012infinite,frenkel2012gerbal} for further background. A coherent 2-group is a monoidal groupoid in which every object is tensor-invertible, with coherent choices of weak inverses. For a $\Lambda$-linear category $\mathcal A$, let
\[
\Aut_\Lambda(\mathcal A)
\]
be the monoidal groupoid whose objects are $\Lambda$-linear autoequivalences, whose morphisms are natural isomorphisms, and whose monoidal product is composition. After choosing an adjoint-equivalence datum for each autoequivalence, this is a coherent 2-group. Moreover, different choices give equivalent coherent 2-groups. If $\mathcal A$ is abelian, $\Aut^{\ex}_\Lambda(\mathcal A)$ denotes the full 2-subgroup on exact autoequivalences.

\begin{theorem}\cite[Corollary 44]{baez2004higher}
	 There is a one-to-one correspondence between equivalence classes of coherent 2-groups and isomorphism classes of quadruples $(G,A,\varphi,[a])$ consisting of
	 \begin{enumerate}
	 	 \item a group $G$,
	 	 \item an abelian group $A$,
	 	 \item an action $\varphi:G\to\Aut(A)$ by automorphisms,
	 	 \item a class $[a]\in H^3_{\mathrm{grp}}(G,A)$ for the preceding $G$-module structure on $A$.
	 \end{enumerate}
\end{theorem}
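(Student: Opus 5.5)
The plan is to prove the classification of Baez--Lauda in the standard way. First reduce to skeletal coherent 2-groups; then show that skeletal ones are classified by the quadruple data via group cohomology.

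\textbf{Invariants.} Given a coherent 2-group $\mathcal G$, set $G=\pi_0(\mathcal G)$, the group of isomorphism classes of objects; the group law is induced by $\otimes$, and inverses exist by tensor-invertibility. Set $A=\pi_1(\mathcal G)=\Aut(1)$. This group is abelian by the Eckmann--Hilton argument, since composition and tensor product of endomorphisms of the unit interchange. The action $\varphi$ sends the class of $x$ to the automorphism $f\mapsto \id_x\otimes f\otimes\id_{\bar x}$, conjugated through the unit and counit isomorphisms. This is independent of choices and defines a homomorphism $G\to\Aut(A)$. All of this data is invariant under monoidal equivalence.

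\textbf{Skeletalization and the 3-cocycle.} Every coherent 2-group is equivalent, as a monoidal category, to a skeletal one. To see this, choose one representative per isomorphism class together with isomorphisms to it, and transport the monoidal structure. Using the coherence theorem for monoidal categories, one may also take the unitors to be identities. In a skeletal model the objects are the elements of $G$, and every hom-set $\operatorname{Hom}(g,g)$ is identified with $A$ by tensoring with $\id_g$. The associator $a_{g,h,k}\in\Aut(ghk)\cong A$ then gives a function $G^3\to A$. The pentagon identity translates exactly into the 3-cocycle condition, where the $G$-action enters through whiskering by $\id_g$. Normalized unitors give a normalized cocycle. A monoidal equivalence between skeletal models consists of a group isomorphism together with a tensorator $G^2\to A$. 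The hexagon-type compatibility of the tensorator with the associators says that the two cocycles differ by the coboundary of the tensorator. Hence the class $[a]$ is well defined, and equivalent 2-groups give isomorphic quadruples.

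\textbf{Realization and injectivity.} Conversely, given $(G,A,\varphi,a)$, build the skeletal 2-group with objects $G$, morphisms $\operatorname{Hom}(g,g)=A$, and $f\otimes f'=f+\varphi(g)f'$ on $\operatorname{Hom}(g,g)\times\operatorname{Hom}(g',g')$. Take associator $a$; the pentagon holds by the cocycle condition. Weak inverses are $g^{-1}$, with unit and counit isomorphisms adjusted by values of $a$ so that the triangle identities for adjoint equivalences hold. Cohomologous cocycles yield equivalent 2-groups, with the identity functor and the cochain as tensorator. The two assignments are mutually inverse on classes.

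\textbf{Main obstacle.} The expected difficulty is the careful bookkeeping in the coherent setting. One must check that the chosen weak-inverse data does not affect the classification, and that every monoidal equivalence between skeletal models, not only the strict ones, induces a coboundary relation. For the first point I would use that adjoint-equivalence data is unique up to unique isomorphism. For the second I would compose with equivalences to skeletal models and apply the coherence of monoidal functors.
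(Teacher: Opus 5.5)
Your proposal is correct and follows essentially the Baez--Lauda/Sinh argument that the paper invokes by citation without giving a proof of its own: pass to a skeletal model with identity unitors, read off a normalized 3-cocycle from the associator, show that a monoidal equivalence between skeletal models changes it by the coboundary of its tensorator, and realize every class by a skeletal 2-group. In the realization step, start from a normalized representative of $[a]$ so that identity unitors satisfy the triangle axiom. Also record an equivalence of skeletal models as compatible isomorphisms on both $G$ and $A$, not just on $G$.
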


\begin{definition}\label{def:2rep}
Let $\mathcal  A$ be a $\Lambda$-linear category. A $\Lambda$-linear 2-representation of a coherent 2-group $\mathcal G$ on $\mathcal A$ is a strong monoidal functor
\[
\rho:\mathcal G\longrightarrow\Aut_\Lambda(\mathcal A).
\]
An ordinary group $Q$ is regarded as the discrete strict 2-group $Q_{\disc}$, having objects $q\in Q$ and only identity morphisms. A 2-representation of $Q_{\disc}$ is therefore a weak categorical action of $Q$: autoequivalences $T_q$,  composition constraints $T_qT_{q'}\Rightarrow T_{qq'}$, and the usual pentagon and unit coherences.
\end{definition}

\begin{definition}
	Suppose that $Q$ is a topological group with a chosen basis $\mathcal C_{Q}$ of open subgroups.
	\begin{enumerate}
		\item For a strict action $\rho$ of $Q$ on an abelian category $\mathcal A$, we say that $\rho$ is \emph{uniformly smooth} if every object $X$ is literally fixed by some $C_X\in\calC_Q$ and every morphism $u$ is literally fixed by some $C_u\in\calC_Q$; that is, $\rho(c)X=X$ for $c\in C_X$ and $\rho(c)u=u$ for $c\in C_u$.
		
		 \item More generally, suppose $p:\widetilde{\mathcal G}\to Q_{\disc}$ is a monoidal projection and a 2-representation factors as $\widetilde\rho=\rho\circ p$. We call $\widetilde\rho$ \emph{$p$-uniformly smooth} if the underlying $Q$-action $\rho$ is uniformly smooth.
	\end{enumerate}

\end{definition}

A normalized scalar 2-cocycle on an ordinary group and the associator of a nontrivial coherent 2-group occupy different cohomological degrees. For a specified $Q$-module structure on an abelian group $A$, a class in $H^2_{\mathrm{grp}}(Q,A)$ classifies group extensions inducing that action; such an extension is central when the action is trivial. In the scalar case below, $Q$ acts trivially on $\Lambda^\times$, and the cocycle supplies the tensorator of a weak action of $Q_{\disc}$. In a skeletal coherent 2-group with $\pi_0=Q$, $\pi_1=A$, and fixed $Q$-action on $A$, the associator is represented by a normalized 3-cocycle and its equivalence class lies in $H^3_{\mathrm{grp}}(Q,A)$~\cite{baez2004higher}. If the kernel is a graded Picard groupoid, the full extension also contains its nontrivial $\pi_0$ or degree data. We will use the $H^2$-class $[\sigma_\alpha]$ for Borel equivariantization and discuss the determinant 2-group separately; no identification between these layers is asserted.

\subsection{A scalar multiplier as a weak categorical action}
Let a group $Q$ act on a set $S$ on the left. Let $\sigma\in Z^2_{\mathrm{grp}}(Q,\Lambda^\times)$ be normalized, and let $\{L(q)\}_{q \in Q}$ be one-dimensional $\Lambda$-vector spaces with multiplication isomorphisms
\[
m_{q,q'}:L(q)\otimes L(q')\xrightarrow{\sim}L(qq')
\]
whose scalar in chosen trivializations is $\sigma(q,q')$. Choose standard vectors $e_q\in L(q)$ satisfying
\[
m_{q,q'}(e_q\otimes e_{q'})=\sigma(q,q')e_{qq'}.
\]
Let
\[
\calV(S)=\Fun(S,\Vect_\Lambda).
\]
For $q\in Q$,  we define an exact cocontinuous autoequivalence
\[
T_q:\calV(S)\longrightarrow\calV(S),\qquad (T_qV)_s=L(q)\otimes V_{q^{-1}s}.
\]
There is a composition constraints
\[
\mu_{q,q'}:T_qT_{q'}\Longrightarrow T_{qq'}
\]
whose component at $s$ is $m_{q,q'}\otimes\id$. The unit isomorphism is
\[
u_V:T_1V=L(1)\otimes V\xrightarrow{\sim}V,\qquad
e_1\otimes v\longmapsto v.
\]

\begin{proposition}[Multiplier action]\label{prop:multiplier-action}
The data $(T_q,\mu_{q,q'},u)$ define an exact weak 2-representation
\[
T^\sigma:Q_{\disc}\longrightarrow\Aut^{\ex}_\Lambda\bigl(\calV(S)\bigr).
\]
The pentagon for the composition constraints is exactly the scalar 2-cocycle identity for $\sigma$.
\end{proposition}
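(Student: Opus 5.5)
The plan is to verify, in order, the ingredients of a strong monoidal functor $Q_{\disc}\to\Aut^{\ex}_\Lambda(\calV(S))$: each $T_q$ is an exact autoequivalence, $\mu_{q,q'}$ is a natural isomorphism, and the associativity and unit coherences hold. Since $Q_{\disc}$ has only identity morphisms and a strict associator, only the object part and these coherence squares need checking.

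\textbf{Step 1: the functors $T_q$.} Exactness and cocontinuity of $T_q$ are computed pointwise in $s$. The functor $V\mapsto V_{q^{-1}s}$ is evaluation, and $L(q)\otimes-$ is tensoring with a line, so both preserve kernels, cokernels and colimits in $\Vect_\Lambda$. For invertibility I will take $T_{q^{-1}}$ as the candidate quasi-inverse. The composite $T_{q^{-1}}T_q$ sends $V$ to $L(q^{-1})\otimes L(q)\otimes V_s$, and the isomorphism $u\circ(m_{q^{-1},q}\otimes\id)$ identifies this with $V$. The other composite is handled the same way. This also shows that $T_q$ is $\Lambda$-linear.

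\textbf{Step 2: the constraints $\mu_{q,q'}$.} First I unwind the composite:
\[
(T_qT_{q'}V)_s=L(q)\otimes (T_{q'}V)_{q^{-1}s}=L(q)\otimes L(q')\otimes V_{q'^{-1}q^{-1}s}.
\]
This agrees with $(T_{qq'}V)_s=L(qq')\otimes V_{(qq')^{-1}s}$ after applying $m_{q,q'}\otimes\id$. That map is an isomorphism because $\sigma(q,q')\in\Lambda^\times$. It is natural in $V$ because it acts only on the line factor.

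\textbf{Step 3: coherence.} This is where I expect the only real content to lie.
\begin{enumerate}
\item \emph{Associativity.} For $q,q',q''$, the two routes $T_qT_{q'}T_{q''}\Rightarrow T_{qq'q''}$ are, at each $s$, the maps
\[
m_{qq',q''}\circ(m_{q,q'}\otimes\id)
\quad\text{and}\quad
m_{q,q'q''}\circ(\id\otimes m_{q',q''})
\]
from $L(q)\otimes L(q')\otimes L(q'')$ to $L(qq'q'')$, tensored with $\id_{V_{(qq'q'')^{-1}s}}$. Whiskering $\mu_{q',q''}$ by $T_q$ involves reindexing by $q^{-1}$, but this does not change the line factor; I need to check that carefully.
\item \emph{Evaluation on the basis.} Evaluating both routes on $e_q\otimes e_{q'}\otimes e_{q''}$ gives $\sigma(q,q')\sigma(qq',q'')e_{qq'q''}$ and $\sigma(q',q'')\sigma(q,q'q'')e_{qq'q''}$. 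These coincide exactly by the cocycle identity, which is the identity proved for $\sigma_\alpha$ in the preceding proposition. Conversely, equality of the two routes forces the identity.
\item \emph{Unit triangles.} These reduce to $m_{1,q}(e_1\otimes e_q)=\sigma(1,q)e_q=e_q$ and $\sigma(q,1)=1$, which hold by normalization, together with the definition of $u$.
\end{enumerate}

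The main obstacle is bookkeeping. I must make sure the whiskered transformations $T_q\mu_{q',q''}$ and $\mu_{q,q'}T_{q''}$ have precisely the claimed components at the correct index, so that the tensor factors line up in the order $L(q)\otimes L(q')\otimes L(q'')$. Once that is settled, the coherence condition collapses to the scalar identity, and exactness of the resulting 2-representation follows from Step 1.
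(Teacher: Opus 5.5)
Your proposal is correct and follows essentially the same route as the paper: unwind $(T_qT_{q'}V)_s=L(q)\otimes L(q')\otimes V_{(qq')^{-1}s}$, reduce the associativity coherence to comparing $\sigma(q,q')\sigma(qq',q'')$ with $\sigma(q',q'')\sigma(q,q'q'')$ on $e_q\otimes e_{q'}\otimes e_{q''}$, obtain the unit coherences from normalization, and obtain exactness from pullback along a bijection followed by tensoring with a line. Your explicit quasi-inverse $T_{q^{-1}}$ and your whiskering bookkeeping just spell out what the paper leaves implicit; note that in this subsection the cocycle identity for $\sigma$ is a standing hypothesis, not the earlier proposition about $\sigma_\alpha$.
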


\begin{proof}
At $s\in S$ one has
\[
(T_qT_{q'}V)_s=L(q)\otimes L(q')\otimes V_{(qq')^{-1}s},
\]
so $m_{q,q'}$ gives the required coherence isomorphism. On three factors, the two composites from $L(q)\otimes L(q')\otimes L(q'')$ to $L(qq'q'')$ have scalars
\[
\sigma(q,q')\sigma(qq',q'')
\quad\text{and}\quad
\sigma(q',q'')\sigma(q,q'q''),
\]
which agree precisely by the cocycle identity. Normalization makes the displayed maps $u_V$ compatible with the composition isomorphisms  and gives the unit coherences. Exactness and cocontinuity follow from pullback along a bijection and tensoring by a line.
\end{proof}

\subsection{Equivariantization and the twisted action groupoid}

\begin{definition}
We  define  the equivariantization  category $\calV(S)^{Q,\sigma}$  whose objects are pairs $(V,\varepsilon)$ with $V\in\calV(S)$ and natural isomorphisms
\[
\varepsilon_q:T_qV\xrightarrow{\sim}V,\qquad q\in Q
\]
such that
\[
\varepsilon_{qq'}\circ\mu_{q,q'}(V)
=
\varepsilon_q\circ T_q(\varepsilon_{q'}),
\qquad
\varepsilon_1=u_V.
\]
Morphisms are maps in $\calV(S)$ commuting with every $\varepsilon_q$.
\end{definition}

Let $Q\ltimes S$ be the action groupoid. A $\sigma$-twisted representation of this groupoid is a family $V_s$ with isomorphisms
\[
\theta_q(s):L(q)\otimes V_s\xrightarrow{\sim}V_{qs}
\]
satisfying
\[
\theta_{qq'}(s)\circ(m_{q,q'}\otimes\id)
=
\theta_q(q's)\circ(\id\otimes\theta_{q'}(s)),
\qquad
\theta_1(s)(e_1\otimes v)=v.
\]

\begin{proposition}[Equivariantization equals twisted descent]\label{prop:equiv-descent}
There is a canonical equivalence
\[
\calV(S)^{Q,\sigma}\simeq\Fun_\sigma(Q\ltimes S,\Vect_\Lambda).
\]
Under this equivalence,
\[
\theta_q(s)=\varepsilon_q(qs),
\qquad
\varepsilon_q(s)=\theta_q(q^{-1}s).
\]
\end{proposition}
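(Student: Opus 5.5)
We define functors in both directions that do nothing to the underlying family $(V_s)_{s\in S}$ and only re-index the structure maps. The proof then checks that re-indexing turns each axiom into the other.

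\textbf{From equivariant objects to twisted representations.} Given $(V,\varepsilon)\in\calV(S)^{Q,\sigma}$, the component of $\varepsilon_q$ at a point $s'$ is a map $(T_qV)_{s'}=L(q)\otimes V_{q^{-1}s'}\to V_{s'}$. Substituting $s'=qs$ gives $\theta_q(s):=\varepsilon_q(qs):L(q)\otimes V_s\to V_{qs}$. Conversely, $\varepsilon_q(s):=\theta_q(q^{-1}s)$ recovers $\varepsilon$. Since $s\mapsto qs$ is a bijection of $S$, these two assignments are mutually inverse on the level of data. So the work is to match the axioms.

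\textbf{Matching the multiplicativity axioms.} Evaluate the identity $\varepsilon_{qq'}\circ\mu_{q,q'}(V)=\varepsilon_q\circ T_q(\varepsilon_{q'})$ at the point $qq's$.
\begin{itemize}
\item The left side at $qq's$ is $\varepsilon_{qq'}(qq's)\circ(m_{q,q'}\otimes\id_{V_s})=\theta_{qq'}(s)\circ(m_{q,q'}\otimes\id)$.
\item $T_q(\varepsilon_{q'})$ at $qq's$ is $\id_{L(q)}\otimes\varepsilon_{q'}(q's)=\id\otimes\theta_{q'}(s)$, using $(T_qW)_{qq's}=L(q)\otimes W_{q's}$.
\item Composing with $\varepsilon_q(qq's)=\theta_q(q's)$ gives the right side $\theta_q(q's)\circ(\id\otimes\theta_{q'}(s))$.
\end{itemize}
This is exactly the twisted cocycle condition for $\theta$. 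Because every point of $S$ has the form $qq's$ for a unique $s$, the two conditions are equivalent and not merely implied in one direction.

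\textbf{Matching the unit axioms.} The condition $\varepsilon_1=u_V$ says $\varepsilon_1(s)(e_1\otimes v)=v$. Since $\theta_1(s)=\varepsilon_1(s)$, this is the unit condition $\theta_1(s)(e_1\otimes v)=v$.

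\textbf{Morphisms and canonicity.} A morphism $f=(f_s)$ commutes with $\varepsilon_q$ if and only if $f_{qs}\circ\varepsilon_q(qs)=\varepsilon_q(qs)\circ(\id\otimes f_s)$ for all $s$. This is precisely naturality of $f$ as a transformation of twisted functors on $Q\ltimes S$. Both functors are identity on morphisms, so they are mutually inverse isomorphisms of categories, and in particular equivalences. They involve no choices beyond the fixed $e_q$ and $m_{q,q'}$, which justifies calling the equivalence canonical.

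The only delicate step is the bookkeeping of base points in $T_q(\varepsilon_{q'})$, namely that it is evaluated at $q^{-1}s'$ inside $T_q$. Everything else is formal.
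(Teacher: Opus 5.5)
Your proposal is correct and follows the same route as the paper. Like the paper, you re-index $\varepsilon_q$ along the bijection $s\mapsto qs$ using $(T_qV)_{qs}=L(q)\otimes V_s$, evaluate the equivariantization identity at $qq's$ to get the twisted descent equation, and identify morphisms by the same formulas; you simply spell out the bookkeeping for $T_q(\varepsilon_{q'})$, the unit axiom, and the converse direction, which the paper leaves implicit.
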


\begin{proof}
The displayed formulas are inverse to one another because
\[
(T_qV)_{qs}=L(q)\otimes V_s.
\]
After substituting $s\mapsto qq's$ in the equivariantization identity, one obtains exactly the twisted descent equation. The same formulas identify morphisms.
\end{proof}

Thus the term ``2-representation'' in the principal series construction below records genuine coherence data: the multiplication maps of the multiplicative lines provide the composition constraints of a weak categorical action, their coordinates are the values of the Borel multiplier, and the principal series category is the corresponding equivariantization.

\section{Congruence levels and the uniformly smooth pseudocolimit}

\subsection{A conjugation-stable congruence basis}
Set
\[
H_0=\GL_n(\calO_2)_L\times\GL_n(\calO_2)_R.
\]
For $i,j\ge1$,  we define
\[
K_{ij}=I_n+u^it^jM_n(\calO_2),
\qquad
C^{\mathrm{dbl}}_{ij}=K_{ij,L}\times K_{ij,R}.
\]
\begin{proposition}\label{prop:congruence-infinite}
	For every $i,j\ge1$, $K_{ij}$ is a normal subgroup of $\GL_n(\calO_2)$ and the quotient $\GL_n(\calO_2)/K_{ij}$ is infinite.
\end{proposition}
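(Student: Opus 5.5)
The plan is to exhibit $K_{ij}$ as the kernel of a reduction homomorphism, which gives both the subgroup property and normality at once, and then to produce infinitely many distinct cosets by hand.

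\emph{Step 1: the ideal.} Put $I_{ij}=u^it^j\calO_2$. This is a principal ideal of the ring $\calO_2$. Since $\calO_2$ is the valuation ring of the rank-two valuation $v$, it consists of $0$ and the elements with $v\ge(0,0)$. Hence $I_{ij}=\{0\}\cup\{x: v(x)\ge (i,j)\}$, using $v(u^it^j)=(i,j)$. Because $j\ge1$, we have $(i,j)>(0,0)$, so $I_{ij}$ lies in the maximal ideal $\mathfrak m=\{x: v(x)>(0,0)\}$. Recall also that $\calO_2$ is local: its units are exactly the elements with $v=(0,0)$.

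\emph{Step 2: $K_{ij}$ is a normal subgroup.} Let $g=I_n+u^it^jX$ with $X\in M_n(\calO_2)$. Expanding the determinant gives $\det g\in 1+I_{ij}\subset 1+\mathfrak m$, which is a unit of $\calO_2$. The adjugate formula then gives $g^{-1}\in M_n(\calO_2)$, so $K_{ij}\subset\GL_n(\calO_2)$. Consider the reduction map
\[
\pi_{ij}:\GL_n(\calO_2)\to\GL_n(\calO_2/I_{ij}),
\]
which is a group homomorphism because it is induced by the ring map $\calO_2\to\calO_2/I_{ij}$. An element $g\in\GL_n(\calO_2)$ lies in $\ker\pi_{ij}$ exactly when $g-I_n\in u^it^jM_n(\calO_2)$, that is, exactly when $g\in K_{ij}$. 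So $K_{ij}=\ker\pi_{ij}$, and a kernel is automatically a normal subgroup. The quotient $\GL_n(\calO_2)/K_{ij}$ then embeds into $\GL_n(\calO_2/I_{ij})$.

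\emph{Step 3: infinitude.} This is where the rank-two structure matters; the analogous quotient for a rank-one congruence level would be finite. For $m\ge0$, set $a_m=u^{-m}t$. This lies in $tE[[t]]\subset\calO_2$ and has $v(a_m)=(-m,1)$. Then $1+a_m$ has valuation $(0,0)$, so it is a unit of $\calO_2$. Define
\[
g_m=\operatorname{diag}(1+a_m,1,\dots,1)\in\GL_n(\calO_2).
\]
For $m\ne m'$, the matrix $g_m g_{m'}^{-1}-I_n$ has a single nonzero entry, namely
\[
\frac{a_m-a_{m'}}{1+a_{m'}}.
\]
Its valuation equals
\[
v(a_m-a_{m'})=(-\max(m,m'),1).
\]
This is strictly less than $(i,j)$: if $j\ge2$ the second coordinates already decide it, and if $j=1$ it holds because $-\max(m,m')<i$. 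So this entry is not in $I_{ij}$, hence $g_mg_{m'}^{-1}\notin K_{ij}$. Therefore the cosets $g_mK_{ij}$ for $m\ge0$ are pairwise distinct, and the quotient is infinite.

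The only point needing care is the lexicographic comparison in Step 3, specifically the case $j=1$. There the first coordinate decides the comparison, and it works precisely because $u^{-m}t$ has unboundedly negative $u$-degree while staying inside $\calO_2$.
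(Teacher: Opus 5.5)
Your proof is correct. It follows the same overall plan as the paper (ideal property, then an explicit rank-one diagonal family), but the subgroup and invertibility part runs on a different key fact.

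\textbf{Subgroup, invertibility, normality.} The paper proves $(I_n+A)^{-1}\in I_n+u^it^jM_n(\calO_2)$ directly with the Neumann series $\sum_{m\ge0}(-A)^m$. This needs two topological inputs: $t$-adic completeness of $\calO_2$, and closedness of $u^it^j\calO_2=u^it^jk[[u]]+t^{j+1}E[[t]]$. Normality is then read off from the ideal property.

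You use only that $\calO_2$ is a valuation ring, hence local with $1+\mathfrak m\subset\calO_2^\times$. So $\det g$ is a unit, and the adjugate places $K_{ij}$ inside $\GL_n(\calO_2)$. Identifying $K_{ij}$ with the kernel of reduction modulo $u^it^j\calO_2$ then gives closure under inverses and normality at once, with no topology. As a bonus, it embeds the quotient into $\GL_n(\calO_2/u^it^j\calO_2)$. Your route is the more elementary one. The paper's route has the small advantage of exhibiting the inverse explicitly as an element of $I_n+u^it^jM_n(\calO_2)$.

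\textbf{Infinitude.} The paper uses the family $1+at^j$, $a\in E$, at the same $t$-depth $j$. Distinctness then reduces to the $u$-adic condition $a\not\equiv a'\pmod{u^ik[[u]]}$, so distinct residues in $E/u^ik[[u]]$ give distinct classes. Your family $1+u^{-m}t$ sits at $t$-depth one instead. It therefore needs the lexicographic case split at $j=1$, which you handle correctly. Both arguments use the same phenomenon: $\calO_2$ contains $u$-adically unbounded elements at positive $t$-depth. So the difference in this step is cosmetic.
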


\begin{proof}
	The set $J_{ij}=u^it^jM_n(\calO_2)$ is a two-sided ideal. The ring $\calO_2=k[[u]]+tE[[t]]$, with the subspace topology induced by the $t$-adic topology on $E[[t]]$, is complete. If $A\in J_{ij}$, then
	\[
	(I_n+A)^{-1}=\sum_{m\ge0}(-A)^m
	\]
	converges $t$-adically. Every term with $m\ge1$ lies in $J_{ij}$ and has $t$-adic order tending to infinity. Moreover,
	\[
	u^it^j\calO_2=u^it^jk[[u]]+t^{j+1}E[[t]],
	\]
	so $J_{ij}$ is closed for this induced topology. Hence the tail lies in $J_{ij}$ and the inverse belongs to $I_n+J_{ij}\subset\GL_n(\calO_2)$. Closure under multiplication and normality follow from the ideal property.
	
	To prove infinitude, already in rank one the elements $1+at^j$, $a\in E$, give distinct classes modulo $1+u^it^j\calO_2$ whenever their coefficients are distinct modulo $u^ik[[u]]$. The quotient $E/u^ik[[u]]$ is infinite.  The diagonal embedding gives the statement for every $n$.
\end{proof}
Let $\calC_H$ be the family of finite intersections of $H$-conjugates of the groups $C^{\mathrm{dbl}}_{ij}$. It is ordered by reverse inclusion. It is closed under finite intersections and conjugation. We use it as a basis of identity neighborhoods; an open subgroup is, by definition, a subgroup containing a member of $\calC_H$.

\begin{lemma}[The congruence subgroup topology]\label{lem:congruence-topology}
The family $\calC_H$ is a conjugation-invariant basis of identity neighborhoods for a Hausdorff group topology $\tau_{\calC}$ on $H$. Equivalently, a subset $U\subseteq H$ is open if and only if, for every $g\in U$, there is a subgroup $C\in\calC_H$ such that $gC\subseteq U$.
\end{lemma}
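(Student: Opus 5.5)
The plan is to check the standard criterion for a family of subgroups to be a basis of identity neighborhoods of a group topology, and then to prove the Hausdorff property separately. Recall the criterion. Let $\mathcal N$ be a nonempty family of subgroups of $H$ such that:
\begin{enumerate}
\item for $C_1,C_2\in\mathcal N$ there is $C_3\in\mathcal N$ with $C_3\subseteq C_1\cap C_2$;
\item for $C\in\mathcal N$ and $h\in H$ there is $C'\in\mathcal N$ with $C'\subseteq hCh^{-1}$.
\end{enumerate}
Then the sets $U$ such that for every $g\in U$ some $C\in\mathcal N$ has $gC\subseteq U$ form a group topology, and $\mathcal N$ is a neighborhood basis at $1$.

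For a subgroup family, the axioms "$V\cdot V\subseteq U$" and "$V^{-1}\subseteq U$" are automatic, since $CC=C$ and $C^{-1}=C$. Condition (ii) gives continuity of conjugation, and this yields continuity of multiplication and inversion at arbitrary points: for instance $g_1C'g_2C'\subseteq g_1g_2C$ as soon as $g_2^{-1}C'g_2\subseteq C$. For $\calC_H$, conditions (i) and (ii) hold on the nose. A finite intersection of finite intersections of conjugates is again one, and a conjugate of a finite intersection of conjugates is the intersection of the conjugated conjugates. So $\calC_H$ is closed under finite intersections and conjugation, as already noted before the lemma.

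I will also verify the "equivalently" clause. Let $\tau$ be the collection of sets $U$ with the stated property. It contains $\emptyset$ and $H$, and it is closed under unions. It is closed under finite intersections: if $gC_1\subseteq U_1$ and $gC_2\subseteq U_2$, then $g(C_1\cap C_2)\subseteq U_1\cap U_2$. Each coset $gC$ is itself open, because for $x\in gC$ we have $xC=gC$. Hence every $C\in\calC_H$ is an open neighborhood of $1$, and $\calC_H$ is a basis at $1$ by construction. Continuity of $(x,y)\mapsto xy^{-1}$ at $(g,h)$ then follows from the conjugation step. Given $C$, choose $C'\subseteq C\cap hCh^{-1}$, which is possible since $hCh^{-1}\in\calC_H$. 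Then
\[
gC'\,(hC')^{-1}=gC'h^{-1}=gh^{-1}(hC'h^{-1})\subseteq gh^{-1}C .
\]

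The main step is Hausdorffness. For a group topology this is equivalent to $\bigcap_{C\in\calC_H}C=\{1\}$, and it suffices to show $\bigcap_{i,j}C^{\mathrm{dbl}}_{ij}=\{1\}$. This reduces to $\bigcap_{i,j\ge1}K_{ij}=\{I_n\}$ in each factor, that is, to $\bigcap_{i,j}u^it^jM_n(\calO_2)=0$. Work entrywise. By the description in the proof of Proposition~\ref{prop:congruence-infinite},
\[
u^it^j\calO_2=u^it^jk[[u]]+t^{j+1}E[[t]]\subseteq t^jE[[t]].
\]
An element lying in all of these sets therefore has $t$-adic order at least $j$ for every $j$, so it is $0$. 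Consequently any $g=(g_L,g_R)\neq1$ has some entry of $g_L-I$ or $g_R-I$ outside $u^it^jM_n(\calO_2)$ for some $i,j$. Then $g\notin C^{\mathrm{dbl}}_{ij}$, and the opens $C^{\mathrm{dbl}}_{ij}$ and $gC^{\mathrm{dbl}}_{ij}$ separate $1$ from $g$. I expect no real obstacle. The one point to state carefully is that conjugates of $C^{\mathrm{dbl}}_{ij}$ by elements outside $H_0$ need not contain any $C^{\mathrm{dbl}}_{i'j'}$. This is harmless, because the basis is defined to include all conjugates, so no comparison between conjugates and the standard congruence subgroups is needed.
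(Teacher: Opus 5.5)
Your proposal follows the paper's proof: the subgroup-basis axioms come from $\calC_H$ being closed under finite intersections and conjugation, and Hausdorffness comes from $\bigcap_{i,j}u^it^j\calO_2=0$. You justify that last equality more fully than the paper does, via $u^it^j\calO_2\subseteq t^jE[[t]]$, and you also check the ``equivalently'' clause explicitly.

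Two small corrections are needed.

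\begin{enumerate}
\item In the continuity of $(x,y)\mapsto xy^{-1}$ you need $hC'h^{-1}\subseteq C$. So choose $C'\subseteq h^{-1}Ch$, which also lies in $\calC_H$, rather than $C'\subseteq hCh^{-1}$; your choice does not give the displayed inclusion.
\item Your closing aside is false. The paper later shows $C^{\mathrm{dbl}}_{i,j+N}\subseteq hC^{\mathrm{dbl}}_{ij}h^{-1}$ for $N$ large. Nothing in your argument depends on the aside, so it can simply be dropped.
\end{enumerate}
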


\begin{proof}
Every member of $\calC_H$ is a subgroup, the family is closed under finite intersections, and $hCh^{-1}\in\calC_H$ for $h\in H$ and $C\in\calC_H$. These properties are precisely the subgroup-basis axioms for multiplication, inversion, and conjugation to be continuous. The resulting topology is Hausdorff because
\[
\bigcap_{C\in\calC_H}C
\subseteq
\bigcap_{i,j\ge1}C^{\mathrm{dbl}}_{ij}
=\{1\};
\]
indeed, a matrix belonging to every $I_n+u^it^jM_n(\calO_2)$ differs from $I_n$ by a matrix whose entries lie in $\bigcap_{i,j\ge1}u^it^j\calO_2=0$.
\end{proof}

\begin{warning}
	Throughout the paper, the words ``open'' and ``uniformly smooth'' refer to $\tau_{\calC}$. No comparison with any of the standard higher topologies on $H$ is asserted.
\end{warning}

\begin{remark}
Morrow constructs a left- and right-invariant lifted integral on $\GL_n(F)$ for every $n$, and Waller gives an explicit $\mathbb R((X))$-valued measure on $\GL_2(F)$ with the same normalization~\cite{morrow2008integration,waller2019measure}.  
\end{remark}
\subsection{Fixed-level split categories}
For $C\in\calC_H$, put
\[
S_C=H/C,
\qquad
\calV_C=\Fun(S_C,\Vect_\Lambda).
\]
We fix Grothendieck universes $\mathbb U\in\mathbb V$. All sets, groupoids, and linear categories called small are $\mathbb U$-small. The field $\Lambda$ and all vector spaces under consideration are $\mathbb U$-small, while $\Vect_\Lambda$ and the resulting functor and module categories are regarded as $\mathbb V$-categories. Terms such as ``all limits and colimits,'' ``arbitrary direct sums,'' and ``cocontinuous'' are understood relative to $\mathbb U$. Thus $\calV_C$ is a Grothendieck abelian category in the usual universe-relative sense, and its limits and colimits are computed pointwise.

If $D\subseteq C$, let
\[
\pi_{D,C}:S_D\longrightarrow S_C,
\qquad
gD\longmapsto gC.
\]
It is easy to see that  the pullback
\[
\pi^*_{D,C}:\calV_C\longrightarrow\calV_D
\]
is faithful and exact and preserves all limits and colimits. It need not be full; this is why we specify the global morphism spaces explicitly.

\subsection{The filtered pseudocolimit}

\begin{definition}\label{def:split-pseudocolimit}
 We define the uniformly smooth split category $\calV_H^{\uSm}$  as follows:  objects are  pairs $(C,V)$ with $C\in\calC_H$ and $V\in\calV_C$. For two objects,  morphisms are defined by
\[
\operatorname{Hom}_{\calV_H^{\uSm}}
\bigl((C,V),(C',V')\bigr)
=
\varinjlim_{D\subseteq C\cap C'}
\operatorname{Hom}_{\calV_D}
\bigl(\pi^*_{D,C}V,\pi^*_{D,C'}V'\bigr),
\]
where $D$ ranges through $\calC_H$. Composition is formed after passage to a common refinement.

This is an explicit model of the filtered pseudocolimit of the categories $\calV_C$ in the 2-category of $\Lambda$-linear categories.
\end{definition}

\begin{theorem}\label{thm:split-abelian}
The category $\calV_H^{\uSm}$ is $\Lambda$-linear and abelian. Finite limits and finite colimits are represented after passage to one common congruence level.
\end{theorem}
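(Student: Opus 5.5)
The proof uses the standard fact that a filtered pseudocolimit of abelian categories along exact, faithful transition functors is abelian. Because the transition functors $\pi^*_{D,C}$ need not be full, I would argue directly with the explicit model of Definition~\ref{def:split-pseudocolimit} rather than cite that fact abstractly.

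\textbf{Linearity and composition.} Each $\operatorname{Hom}_{\calV_D}(\pi^*_{D,C}V,\pi^*_{D,C'}V')$ is a $\Lambda$-vector space. The transition maps, given by $f\mapsto\pi^*_{E,D}f$ for $E\subseteq D$, are $\Lambda$-linear. The colimit is filtered, since $\calC_H$ is closed under finite intersections. Hence the Hom sets are $\Lambda$-vector spaces, and composition is bilinear. Composition is well defined because $\pi^*$ is a strict functor and $\pi^*_{E,D}\pi^*_{D,C}=\pi^*_{E,C}$; the latter holds because $\pi_{D,C}\circ\pi_{E,D}=\pi_{E,C}$ on the nose.

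\textbf{Replacement lemma.} For $D\subseteq C$, the object $(C,V)$ is isomorphic to $(D,\pi^*_{D,C}V)$ via the identity at level $D$. Consequently, any finite diagram of objects and morphisms can be moved to a single common level $D$, where it is represented by a diagram in $\calV_D$. Finiteness matters here: a finite diagram involves finitely many levels and finitely many representatives of morphisms, and these admit a common refinement, again by closure under finite intersections.

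\textbf{Additive structure and (co)limits.} The zero object is $(C,0)$ for any $C$. Biproducts are $(D,V\oplus V')$, and one checks the universal property by using that $\pi^*$ commutes with $\oplus$ and that filtered colimits of vector spaces commute with finite products. For kernels and cokernels, a morphism $f$ is represented at some level $D$ by $f_D$, and I would take $(D,\ker f_D)$ and $(D,\operatorname{coker} f_D)$. The key point is that these do not depend on the chosen representative. If $f_D$ and $f'_{D'}$ become equal at a level $E$, then exactness of $\pi^*_{E,D}$ gives $\pi^*_{E,D}\ker f_D=\ker \pi^*_{E,D}f_D$, and this yields a canonical isomorphism between the two candidate kernels.

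\textbf{Universal property of the kernel.} Suppose $g:(C'',W)\to(D,V)$ satisfies $f\circ g=0$. Then the composite vanishes in the colimit, so at some finer level $E$ the representative satisfies $\pi^*(f_D)\circ g_E=0$. Therefore $g_E$ factors uniquely through $\ker\pi^*f_D=\pi^*\ker f_D$. Uniqueness of the factorization in the colimit follows from the monomorphism $\pi^*\ker f_D\hookrightarrow\pi^*V$ at every finer level, again by exactness of $\pi^*$. The cokernel is handled dually.

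\textbf{Image equals coimage.} The canonical map $\operatorname{coim}\to\operatorname{im}$ is computed at the common level $D$. There it is an isomorphism, since $\calV_D$ is abelian with limits and colimits computed pointwise in $\Vect_\Lambda$. The same reasoning gives general finite limits and colimits at a common level, which proves the second sentence of the theorem.

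\textbf{Main obstacle.} The delicate step is the verification of the universal properties in the presence of non-full transition functors. A morphism in the pseudocolimit may exist only at a finer level, and an equality of morphisms may hold only after refinement. Everything therefore hinges on two facts: $\pi^*$ is exact and faithful, and kernels and cokernels commute with $\pi^*$. Faithfulness is what ensures that an equation which holds after refinement already determines the morphism in the colimit Hom; I would verify this carefully as a separate lemma before the kernel/cokernel arguments.
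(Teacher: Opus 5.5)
Your proposal is correct and follows essentially the same route as the paper's proof: represent everything at a common refinement, take pointwise kernels and cokernels there, use exactness of $\pi^*_{E,D}$ both for independence of representatives and for the universal properties after further refinement, and check $\operatorname{coim}\to\operatorname{im}$ at a single level. One small correction: faithfulness of the transition functors is not needed here, because equality in the filtered colimit is witnessed after refinement by definition, and faithfulness only makes each level-$D$ Hom inject into the colimit Hom; the separate lemma you plan is therefore harmless but unnecessary.
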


\begin{proof}
Linearity and composition follow from filteredness. By the defining equivalence relation in the filtered colimit, equality of two morphism representatives is witnessed after passage to a common refinement. Let
\[
f:(C,V)\longrightarrow(C',V')
\]
be represented at a common refinement $D$ by a morphism
\[
f_D:\pi^*_{D,C}V\longrightarrow\pi^*_{D,C'}V'.
\]
Define $\ker(f)$ and $\operatorname{coker}(f)$ to be represented at level $D$ by the pointwise kernel and cokernel of $f_D$. If $f$ is represented at a finer level, exactness identifies the new kernel and cokernel with the pullbacks of the old ones. If two representatives become equal only after a further refinement, the same argument there identifies the resulting kernels and cokernels. Thus these objects are independent of all choices.

To check the universal property of the kernel, let $g:X\to(C,V)$ satisfy $fg=0$ in the pseudocolimit. Both $g$ and the equality $fg=0$ are represented after a further refinement $E\subseteq D$. Exactness of $\pi^*_{E,D}$ identifies the pullback of $\ker(f_D)$ with the kernel of the pullback of $f_D$, so $g$ factors uniquely at level $E$. The same argument proves uniqueness in the colimit. Cokernels are analogous. Finite biproducts are formed at a common refinement.

Since all transition functors are pointwise pullbacks, images and coimages are computed fiberwise and therefore preserved under refinement. At level $D$, the canonical morphism $\operatorname{coim}(f_D)\to\operatorname{im}(f_D)$ is an isomorphism. Exact pullback preserves this isomorphism at every refinement, and hence $\operatorname{coim}(f)\to\operatorname{im}(f)$ is an isomorphism in the pseudocolimit. Thus the category is abelian.
\end{proof}

\begin{warning}
No global cocompleteness statement is made for $\calV_H^{\uSm}$. A family of objects can require unboundedly fine stabilizers, while a single object of the pseudocolimit is represented at one level. Moreover, filtered colimits of morphism spaces do not in general commute with infinite products, so even a pointwise direct sum chosen at a common starting level need not satisfy the global coproduct universal property. All arbitrary direct sums below are taken inside one fixed category $\calV_C$.
\end{warning}

For $h\in H$, right multiplication gives a bijection
\[
r_h:S_{hCh^{-1}}\longrightarrow S_C,
\qquad
g(hCh^{-1})\longmapsto ghC.
\]
Define
\[
R_h^C=(r_h)^*:\calV_C\longrightarrow\calV_{hCh^{-1}}.
\]
These functors commute with refinement and satisfy
\[
R_{h_1}^{h_2Ch_2^{-1}}\circ R_{h_2}^C=R_{h_1h_2}^C,
\qquad
R_1^C=\id.
\]

\begin{theorem}[Uniformly smooth right action]\label{thm:right-action}
The functors $R_h$ define an exact strict action of $H$ on $\calV_H^{\uSm}$. Every object and morphism admits an open stabilizer in the chosen congruence basis, acting strictly trivially on a representative level.
\end{theorem}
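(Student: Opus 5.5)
We define $R_h$ on the pseudocolimit of Definition~\ref{def:split-pseudocolimit} levelwise and reduce every assertion to properties of pointwise pullback along bijections. On objects we set $R_h(C,V)=(hCh^{-1},R_h^CV)$; this is legitimate because $\calC_H$ is closed under conjugation. For morphisms, first check that $r_h$ is compatible with the projections: for $D\subseteq C$ one has $\pi_{C}\circ r_h=r_h\circ\pi_{hDh^{-1},hCh^{-1}}$ as maps $S_{hDh^{-1}}\to S_C$, since $g(hDh^{-1})\mapsto ghD\mapsto ghC$ either way. Hence
\[
R_h^C\circ\pi^*_{D,C}=\pi^*_{hDh^{-1},hCh^{-1}}\circ R_h^D .
\]
This is the ``commute with refinement'' property stated before the theorem. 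A representative $f_D$ of a morphism $(C,V)\to(C',V')$ is then sent to $R_h^Df_D$ at level $hDh^{-1}\subseteq hCh^{-1}\cap hC'h^{-1}$. Conjugation by $h$ is an order isomorphism of $\calC_H$, so it is cofinal. The displayed identity shows that the assignment respects the transition maps of the filtered colimit, so it descends to a well-defined $\Lambda$-linear map on Hom spaces. Compatibility with composition follows by computing at a common refinement.

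\textbf{Strictness.} The identities $R_{h_1}^{h_2Ch_2^{-1}}R_{h_2}^C=R_{h_1h_2}^C$ and $R_1^C=\id$ hold on the nose at every level, because they come from the literal equality of maps $r_{h_2}\circ r_{h_1}=r_{h_1h_2}$ (right multiplication by $h_1$, then by $h_2$). So $h\mapsto R_h$ is a strict action: the composition constraints are identities, and the coherences are trivial. Each $R_h$ is an isomorphism of categories with inverse $R_{h^{-1}}$.

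\textbf{Exactness.} By Theorem~\ref{thm:split-abelian}, kernels and cokernels are computed pointwise at a common level. $R_h^D$ is pullback along a bijection, so it carries the pointwise kernel or cokernel of $f_D$ to that of $R_h^Df_D$. Hence $R_h$ preserves kernels and cokernels and is exact.

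\textbf{Stabilizers.} For an object $(C,V)$ we take $C_X=C$. For $c\in C$, the group $cCc^{-1}=C$ and $r_c(gC)=gcC=gC$, so $r_c=\id_{S_C}$ and $R_c^CV=V$ literally. For a morphism represented by $f_D$, we take $C_u=D$. This group is contained in $C\cap C'$ and fixes both endpoints, and the same computation at level $D$ gives $R_c^Df_D=f_D$, hence equality in the colimit.

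\textbf{Main obstacle.} We expect the only delicate point to be the well-definedness on morphisms: the commutation of $R_h$ with $\pi^*$ and with the equivalence relation defining the colimit, together with cofinality of conjugated levels. Everything else is formal once this is checked.
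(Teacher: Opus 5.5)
Your proof is correct and follows the paper's route: a levelwise definition of $R_h$, the strict law coming from $r_{h_2}\circ r_{h_1}=r_{h_1h_2}$, compatibility with refinement, exactness because $R_h$ is invertible with inverse $R_{h^{-1}}$, and $r_c=\id_{S_C}$ for $c\in C$ giving literal stabilizers; you spell out the well-definedness on colimit Hom-spaces, which the paper leaves implicit. One index slip: the displayed commutation should read $R_h^D\circ\pi^*_{D,C}=\pi^*_{hDh^{-1},hCh^{-1}}\circ R_h^C$ (as you wrote it, neither side is composable), and this corrected form is what your set-level identity gives and what your treatment of $R_h^Df_D$ actually uses.
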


\begin{proof}
The strict action law holds at every level and is compatible with refinement. Each $R_h^C$ is an equivalence of split categories, hence exact, so the induced global functor is exact.

An object and a morphism are represented at a common level $C$. For $c\in C$, one has $cCc^{-1}=C$ and $gcC=gC$ for every $g\in H$, so $R_c^C$ is literally the identity on that representative. This gives a common open stabilizer.
\end{proof}

\section{Twisted Borel equivariantization and the principal series 2-representation}

\subsection{The uniformly smooth principal series 2-representation}
For $C\in\calC_H$,  we use  Proposition~\ref{prop:multiplier-action} by setting
\[
Q=B_H,\qquad S=S_C,\qquad L(q)=L_\alpha(q),\qquad\sigma=\sigma_\alpha.
\]
Thus
\[
(T_b^{\alpha,C}V)_s:=L_\alpha(b)\otimes V_{b^{-1}s}
\]
defines an exact weak 2-representation of $(B_H)_{\disc}$ on $\calV_C$. Its composition constraints is induced by $m_{b,b'}$.

\begin{definition}
The fixed-level categorical principal series is the equivariantization
\[
\calI_C(\alpha):=\calV_C^{B_H,\sigma_\alpha}.
\]
By Proposition~\ref{prop:equiv-descent}, it is canonically equivalent to
\[
\Fun_{\sigma_\alpha}(B_H\ltimes S_C,\Vect_\Lambda).
\]
\end{definition}

\begin{proposition}\label{prop:fixed-groth}
	For each $C$, the category $\calI_C(\alpha)$ is a $\Lambda$-linear Grothendieck abelian category. Limits, colimits, kernels, and cokernels are computed on the underlying fibers.
\end{proposition}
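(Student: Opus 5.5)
I will work through the twisted descent description of Proposition~\ref{prop:equiv-descent}, which identifies $\calI_C(\alpha)$ with $\Fun_{\sigma_\alpha}(B_H\ltimes S_C,\Vect_\Lambda)$, and then realize this as a module category. Let $\Lambda^{\sigma_\alpha}[B_H\ltimes S_C]$ be the twisted groupoid algebra. It has basis $e_b\delta_s$ with product $(e_b\delta_{b's})(e_{b'}\delta_s)=\sigma_\alpha(b,b')e_{bb'}\delta_s$ and all other products zero. It is non-unital, but it has local units $\delta_s$. A $\sigma_\alpha$-twisted representation is the same thing as a nondegenerate (unital) module over this algebra, with $V_s=\delta_sM$ and $\theta_b(s)$ given by the action of $e_b\delta_s$. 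The twisted descent equation corresponds to associativity, using the cocycle identity from the Proposition of Section~2, and the unit normalization corresponds to $\sigma_\alpha(1,b)=1$. I will also decompose $S_C$ into $B_H$-orbits. Picking a point $s_0$ in each orbit with stabilizer $P=\operatorname{Stab}_{B_H}(s_0)$, the category becomes a product over orbits of $\sigma_\alpha|_P$-twisted representations of $P$, that is, modules over the twisted group algebra $\Lambda^{\sigma_\alpha}[P]$. Either description gives a Grothendieck abelian category: module categories are Grothendieck, with generator the regular module, and products of Grothendieck categories are Grothendieck.

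For the statement about how limits and colimits are computed, I will argue directly and not through Morita theory, since this is the form the later sections will use. Take a diagram $(V^i,\varepsilon^i)$ indexed by a $\mathbb U$-small category. Form the pointwise limit or colimit $V$ in $\calV_C$, which exists by the discussion of fixed-level split categories. Each $T_b^{\alpha,C}$ is tensoring a pullback along a bijection with a line $L_\alpha(b)$, so it is an exact autoequivalence that preserves all limits and colimits (Proposition~\ref{prop:multiplier-action}). Hence the canonical comparison $T_bV\to\lim T_bV^i$, or $\operatorname{colim}T_bV^i\to T_bV$, is an isomorphism. The $\varepsilon^i_b$ then induce $\varepsilon_b:T_bV\to V$. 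The cocycle condition and the unit condition for $\varepsilon$ are checked on each component. They follow from those for $\varepsilon^i$ and from naturality of $\mu_{b,b'}$ and $u$, using uniqueness of maps into a limit or out of a colimit. The universal property in $\calI_C(\alpha)$ follows because a cone of equivariant maps induces a map of underlying objects, and that map commutes with $\varepsilon$, again by uniqueness. Kernels and cokernels are special cases of this. Consequently the forgetful functor $\calI_C(\alpha)\to\calV_C$ is faithful, exact, and creates limits and colimits.

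The remaining steps are short. Abelianness follows because the forgetful functor is faithful and exact and reflects isomorphisms, so the coimage--image comparison is an isomorphism fiberwise. $\Lambda$-linearity holds because $\varepsilon$-compatibility is a linear condition. Filtered colimits are exact because they are computed fiberwise in $\Vect_\Lambda$.

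The main obstacle is producing a generator. For this I will use induction, the left adjoint to the forgetful functor, defined by
\[
\operatorname{Ind}(W)_s=\bigoplus_{b\in B_H}L_\alpha(b)\otimes W_{b^{-1}s},
\]
with $\varepsilon$ built from the maps $m_{b,b'}$. The cocycle identity makes this an equivariant object. I will check the adjunction $\operatorname{Hom}(\operatorname{Ind}W,(V,\varepsilon))\cong\operatorname{Hom}_{\calV_C}(W,V)$ by restricting along the summand $b=1$. The objects $\operatorname{Ind}(\Lambda\delta_s)$, for $s\in S_C$, then form a small generating family, because $\operatorname{Hom}(\operatorname{Ind}\Lambda\delta_s,V)=V_s$ detects nonzero objects. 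The sum over $B_H$ is $\mathbb U$-small, so this is legitimate within the universe conventions.
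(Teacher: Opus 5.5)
Your proposal is correct and follows essentially the same route as the paper: the descent identification of Proposition~\ref{prop:equiv-descent}, creation of limits and colimits by the fiberwise forgetful functor, and Grothendieckness from the module description over the twisted groupoid linearization. Your induction objects $\operatorname{Ind}(\Lambda\delta_s)$ are the representable modules of that small $\Lambda$-linear category, so your explicit generator simply makes precise what the paper calls ``the standard description.''
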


\begin{proof}
	Via Proposition~\ref{prop:equiv-descent}, this is the category of representations of a small action groupoid with a scalar central twist. The forgetful functor to $\prod_{s\in S_C}\Vect_\Lambda$ creates limits and colimits, which are therefore pointwise. The cocycle identity, equivalently the pentagon of Proposition~\ref{prop:multiplier-action}, guarantees consistency of the descent maps. Grothendieckness follows from the standard description as modules over a small $\Lambda$-linear category.
\end{proof}

For $D\subseteq C$, the $B_H$-equivariant projection $S_D\to S_C$ strictly intertwines the weak Borel actions and induces an exact pullback
\[
\pi^*_{D,C}:\calI_C(\alpha)\longrightarrow\calI_D(\alpha).
\]

\begin{definition}
The uniformly smooth principal series category is the explicit pseudocolimit
\[
\calI_F^{\uSm}(\alpha)
:=
\mathop{\mathrm{colim}}_{C\in\calC_H}\calI_C(\alpha),
\]
understood in the concrete sense of Definition~\ref{def:split-pseudocolimit}.
\end{definition}

For $h\in H$, right translation strictly intertwines the weak Borel actions:
\[
R_h^CT_b^{\alpha,C}
=
T_b^{\alpha,hCh^{-1}}R_h^C.
\]
Indeed, $r_h(b^{-1}s)=b^{-1}r_h(s)$ and the line $L_\alpha(b)$ is unchanged. Compatibility with the composition constraints is literal. Hence $R_h^C$ descends to an exact equivalence on the equivariantizations.

\begin{theorem}[Uniformly smooth principal series 2-representation]\label{thm:principal}
The category $\calI_F^{\uSm}(\alpha)$ is $\Lambda$-linear and abelian. Right translation defines an exact strict uniformly smooth 2-representation
\[
\rho_\alpha:H_{\disc}\longrightarrow
\Aut^{\ex}_\Lambda\bigl(\calI_F^{\uSm}(\alpha)\bigr),
\qquad
h\longmapsto R_h.
\]
Every object and every morphism has an open stabilizer in $\calC_H$.
\end{theorem}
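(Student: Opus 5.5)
The proof will repeat, for the equivariantized categories, the two arguments already given for the split pseudocolimit: Theorem~\ref{thm:split-abelian} and Theorem~\ref{thm:right-action}. The fixed-level split categories $\calV_C$ are replaced by the Grothendieck abelian categories $\calI_C(\alpha)$ of Proposition~\ref{prop:fixed-groth}.

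\emph{Abelianness.} I take the transition functors to be
\[
\pi^*_{D,C}:\calI_C(\alpha)\to\calI_D(\alpha).
\]
The key input is that each $\pi^*_{D,C}$ is exact and $\Lambda$-linear, and commutes with the forgetful functor to underlying fibers. Exactness holds because kernels and cokernels in $\calI_C(\alpha)$ are computed fiberwise (Proposition~\ref{prop:fixed-groth}) and pullback along $S_D\to S_C$ is fiberwise.

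Given a morphism $f$ represented at a level $D$, I define $\ker f$ and $\operatorname{coker} f$ by the fiberwise kernel and cokernel of $f_D$ inside $\calI_D(\alpha)$. The descent isomorphisms $\theta_b$ restrict and descend to these, since they are natural in the object. The argument of Theorem~\ref{thm:split-abelian} then goes through word for word:
\begin{itemize}
\item independence of representative follows from exactness of pullback;
\item the universal property is checked at a further refinement;
\item the isomorphism $\operatorname{coim}\to\operatorname{im}$ holds at level $D$ because $\calI_D(\alpha)$ is abelian, and it is preserved by exact pullback.
\end{itemize}
$\Lambda$-linearity follows from filteredness of $\calC_H$, since the Hom spaces are filtered colimits of $\Lambda$-vector spaces.

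\emph{The action.} For each $h$ I use the displayed intertwining identity
\[
R_h^C T_b^{\alpha,C}=T_b^{\alpha,hCh^{-1}}R_h^C,
\]
which is compatible with the $\mu_{b,b'}$. Given this, $R_h^C$ lifts strictly to a functor $\calI_C(\alpha)\to\calI_{hCh^{-1}}(\alpha)$ by
\[
(V,\varepsilon)\mapsto \bigl(R_h^CV,\,R_h^C(\varepsilon)\bigr).
\]
The equivariance equations are preserved because $R_h^C$ is a strict functor that carries $\mu$ to $\mu$. The level-wise identities
\[
R_{h_1}^{h_2Ch_2^{-1}}R_{h_2}^C=R_{h_1h_2}^C,\qquad R_1^C=\id,
\]
hold on underlying objects, and hence on equivariant objects, since the $\varepsilon$ data is transported literally.

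I would also check that $R_h$ commutes strictly with $\pi^*_{D,C}$. This holds because $r_h$ commutes with the projections $S_{hDh^{-1}}\to S_{hCh^{-1}}$. It follows that the $R_h$ induce functors on the pseudocolimit, on both objects and Hom-colimits, and the strict action law holds there. Each $R_h$ has the strict inverse $R_{h^{-1}}$, so it is an exact autoequivalence. The action is therefore a strict monoidal functor $H_{\disc}\to\Aut^{\ex}_\Lambda(\calI_F^{\uSm}(\alpha))$ with identity constraints.

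\emph{Uniform smoothness.} Any object, or any finite collection of objects and morphisms, is represented at a common level $C$. For $c\in C$ we have $cCc^{-1}=C$ and $r_c=\id_{S_C}$. Hence $R_c^C$ is literally the identity on $(V,\varepsilon)$ and on morphism representatives, and $C\in\calC_H$ is an open stabilizer.

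\emph{Main obstacle.} The step needing most care is the well-definedness of $R_h$ on morphisms of the pseudocolimit. The level $hCh^{-1}$ varies with $h$, so the map on Hom-colimits has to be identified by reindexing along the cofinal bijection $D\mapsto hDh^{-1}$ of $\calC_H$, which is conjugation-stable. I would verify explicitly that this reindexing respects the colimit's equivalence relation; this uses only the strict commutation of $R_h$ with refinement.
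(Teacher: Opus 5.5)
Your proposal is correct and follows the paper's proof: abelianness by rerunning the argument of Theorem~\ref{thm:split-abelian} with the exact fiberwise transition functors on $\calI_C(\alpha)$, the action via the strict intertwining $R_h^CT_b^{\alpha,C}=T_b^{\alpha,hCh^{-1}}R_h^C$, which transports the equivariance data, and uniform smoothness from Theorem~\ref{thm:right-action} at a representing level. You spell out points the paper leaves implicit, namely the strict commutation of $R_h$ with refinement and the reindexing $D\mapsto hDh^{-1}$ of the Hom-colimits; that reindexing is in fact an order-isomorphism of $\calC_H$, so it is immediate.
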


\begin{proof}
The abelian statement follows from the proof of Theorem~\ref{thm:split-abelian}, since every transition functor is faithful and exact. The strict intertwining above transports equivariantization data, and the identities
\[
R_{h_1}R_{h_2}=R_{h_1h_2},
\qquad
R_1=\id
\]
hold strictly in the chosen pseudocolimit model. They are the composition constraints and unit of the monoidal functor $\rho_\alpha$. Uniform smoothness follows from Theorem~\ref{thm:right-action}: an object and a morphism represented at level $C$ are fixed by $C$.
\end{proof}

\subsection{The determinant 2-group and inflation}\label{subsec:determinant-inflation}
Fix a model, denoted $2\text{-}\operatorname{Tate}_k$, of the exact category $2\text{-}\operatorname{Tate}^{\mathrm{el}}(\Vect_k^{\fd})$ of elementary 2-Tate objects over $k$ in the sense of~\cite{braunling2016tate}. The field $F=k((u))((t))$ is an elementary 2-Tate object, and
\[
W_{F,n}=F_L^n\oplus F_R^n
\]
is an object of $2\text{-}\operatorname{Tate}_k$. We let 
\[
\Aut_{2\text{-}\operatorname{Tate}_k}(W_{F,n})
\]
denote the ordinary group of automorphisms of $W_{F,n}$ in this exact category. Multiplication by block-diagonal matrices defines a homomorphism
\[
\iota_H:H_{\disc}
\longrightarrow
\Aut_{2\text{-}\operatorname{Tate}_k}(W_{F,n})_{\disc}.
\]

We take the existence of a compatible graded determinant extension for the chosen 2-Tate model as an input and fix a coherent central extension of 2-groups
\[
\Pic_k^{\mathbb Z}
\longrightarrow
\widetilde{\Aut}^{\det}_{2\text{-}\operatorname{Tate}_k}(W_{F,n})
\longrightarrow
\Aut_{2\text{-}\operatorname{Tate}_k}(W_{F,n})_{\disc}.
\]
 Here $\Pic_k^{\mathbb Z}$ is the graded Picard groupoid of graded one-dimensional $k$-vector spaces, with $\pi_0\simeq\mathbb Z$ and $\pi_1\simeq k^\times$.   Central extensions by graded lines or Picard groupoids associated with double-loop and two-dimensional local-field constructions appear in~\cite{frenkel2012gerbal,osipov2011categorical}; the exact-category model of Tate objects and its index map are developed in~\cite{braunling2016tate,braunling2018index}, and a $K$-theoretic construction of higher Tate central extensions is given in~\cite{saito2014higher}. 

We define the determinant 2-group over $H$ to be the homotopy pullback
\[
H^{\det}_{F,n}
:=
H_{\disc}
\mathop{\times}^{h}_{\Aut_{2\text{-}\operatorname{Tate}_k}(W_{F,n})_{\disc}}
\widetilde{\Aut}^{\det}_{2\text{-}\operatorname{Tate}_k}(W_{F,n}),
\]
taken in the 2-category of coherent 2-groups. Let
\[
p:H^{\det}_{F,n}\longrightarrow H_{\disc}
\]
be the first projection. By construction, the homotopy fiber of $p$ over the unit object is equivalent to $\Pic_k^{\mathbb Z}$. Only the projection $p$ is used below; replacing the chosen determinant-extension model by an equivalent one yields an equivalent inflated action.

\begin{corollary}[Inflation along the determinant extension]\label{cor:det-lift}
Precomposition with $p$ gives an exact $p$-uniformly smooth 2-representation
\[
\rho_\alpha^{\det}:=\rho_\alpha\circ p:
H^{\det}_{F,n}\longrightarrow
\Aut^{\ex}_\Lambda\bigl(\calI_F^{\uSm}(\alpha)\bigr).
\]
The determinant Picard kernel acts coherently trivially: every kernel object is sent, through the unit coherence of the factorization, to the identity autoequivalence, and every kernel morphism is sent to the identity natural transformation.
\end{corollary}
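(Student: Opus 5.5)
The plan is to obtain every assertion formally from Theorem~\ref{thm:principal} together with the fact that $p$ is a monoidal functor of coherent 2-groups. First, I will record that the homotopy pullback $H^{\det}_{F,n}$ comes with its first projection $p:H^{\det}_{F,n}\to H_{\disc}$ as a strong monoidal functor, with monoidal constraint $\phi_{x,y}:p(x)p(y)\to p(xy)$ and unit constraint $\phi_0:1\to p(1)$. Since $H_{\disc}$ has only identity morphisms, all these constraints are identities. Composing strong monoidal functors gives a strong monoidal functor, so $\rho_\alpha\circ p$ is a 2-representation in the sense of Definition~\ref{def:2rep}. Its structure maps are the composites $\rho_\alpha(\phi_{x,y})\circ\mu^{\rho}_{p(x),p(y)}$. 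Because $\rho_\alpha$ is strict and the $\phi$'s are identities, these composites are again identities. Exactness holds because $\rho_\alpha$ lands in $\Aut^{\ex}_\Lambda$, and precomposition does not change the target. $p$-uniform smoothness holds by definition: the factorization $\widetilde\rho=\rho_\alpha\circ p$ is literally given, and the underlying $H$-action $\rho_\alpha$ is uniformly smooth by Theorem~\ref{thm:principal}, with stabilizers in $\calC_H$.

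For the kernel statement, I will identify the homotopy fiber of $p$ over the unit with $\Pic_k^{\mathbb Z}$. Concretely, its objects are pairs $(x,\gamma)$ with $x\in H^{\det}_{F,n}$ and $\gamma:p(x)\to 1$ in $H_{\disc}$. Since $H_{\disc}$ is discrete, $\gamma$ forces $p(x)=1$ and $\gamma=\id$. The functor $\rho_\alpha^{\det}$ then sends $x$ to $\rho_\alpha(1)=R_1$. By the strict identity $R_1=\id$ in the chosen pseudocolimit model, this is the identity autoequivalence, and the unit coherence of the factorization is itself the identity. A kernel morphism $f:x\to x'$ maps under $p$ to a morphism $1\to1$ in $H_{\disc}$, which must be $\id_1$. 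Its image is therefore $\rho_\alpha(\id_1)=\id_{R_1}$, the identity natural transformation.

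Finally, I will check that these identifications respect the monoidal structure on the kernel, which is what "coherently trivially" means. The restricted functor $\Pic_k^{\mathbb Z}\to\Aut^{\ex}_\Lambda$ has tensorator $\rho_\alpha(\phi_{x,y})\circ\mu^{\rho}_{1,1}$, and both factors are identities, so the restriction agrees with the trivial monoidal functor as monoidal functors. I also need to address independence of the determinant-extension model. An equivalence of extensions over $\Aut(W_{F,n})_{\disc}$ induces an equivalence of the homotopy pullbacks over $H_{\disc}$ commuting with $p$ up to a 2-cell. That 2-cell is necessarily an identity, because $H_{\disc}$ is discrete. Hence the inflated actions are equivalent.

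The only real obstacle is bookkeeping. One has to be honest that the homotopy pullback in coherent 2-groups can be modelled so that $p$ is a strong monoidal functor whose components land in the discrete 2-group, and that its fiber really is $\Pic_k^{\mathbb Z}$; the paper simply asserts both. I will treat these as properties of the chosen model and invoke them rather than reprove them. Everything else is formal, because maps into a discrete 2-group carry no nontrivial 2-cells.
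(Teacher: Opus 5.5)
Your proposal is correct and follows essentially the same route as the paper. The paper composes the monoidal projection $p$ with the strict action $\rho_\alpha$, observes that the homotopy fiber of $p$ lands on the unit object and identity morphism of $H_{\disc}$, so that $\rho_\alpha$ sends it to $R_1=\id$ and to identity 2-cells, and reads off exactness and $p$-uniform smoothness from Theorem~\ref{thm:principal} and the definition. Your extra bookkeeping is sound and only makes the paper's three-line argument more explicit: the monoidal constraints are forced to be identities because $H_{\disc}$ is discrete, and the model independence you prove is recorded in the paper just before the corollary rather than in its proof.
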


\begin{proof}
The projection $p$ and the action $\rho_\alpha$ are monoidal functors, so their composite is monoidal. The homotopy fiber of $p$ maps to the unit object and identity morphism of $H_{\disc}$; applying $\rho_\alpha$ therefore gives the identity autoequivalence and its identity natural transformation. Exactness and $p$-uniform smoothness are inherited from the underlying $H$-action by  the definition of uniformly smooth.
\end{proof}

\subsection{The spherical object}
Let $\calC_{H_0}$ be the directed family of finite intersections of the groups $C^{\mathrm{dbl}}_{ij}$. Every $C\in\calC_{H_0}$ is normal in $H_0$.

\begin{lemma}[Integral triviality]\label{lem:integral-triviality}
For $b,b'\in B_H\cap H_0$, one has $\sigma_\alpha(b,b')=1$.
\end{lemma}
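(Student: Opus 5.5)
The plan is to reduce the statement to the vanishing of $\nu_F$ on pairs of units of $\calO_2$. By definition,
\[
\sigma_\alpha(b,b')=\prod_{r=1}^n\alpha_r^{-\nu_F\{x_r(b),y_r(b')\}},
\]
so it suffices to show that $\nu_F\{x,y\}=0$ whenever $x,y\in\calO_2^\times$. First I will check that if $b=(b_L,b_R)\in B_H\cap H_0$, then every diagonal entry $x_r(b)=(b_L)_{rr}$ and $y_r(b)=(b_R)_{rr}$ lies in $\calO_2^\times$. Indeed, $b_L$ is upper triangular with entries in $\calO_2$, and $b_L^{-1}$ is again upper triangular with entries in $\calO_2$, because $b_L\in\GL_n(\calO_2)$. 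The diagonal entries of $b_L^{-1}$ are the inverses $(b_L)_{rr}^{-1}$, so each $(b_L)_{rr}$ is a unit of $\calO_2$. The same argument applies to $b_R$.

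Next I will describe $\calO_2^\times$ explicitly using the rank-two valuation. An element $x\in\calO_2$ is a unit exactly when $v(x)=(0,0)$. The inverse has valuation $-v(x)$, and both $v(x)$ and $-v(x)$ must be $\ge(0,0)$. Concretely, this means $v_t(x)=0$ and the reduction $\bar x\in E^\times$ modulo $t$ satisfies $v_u(\bar x)=0$; that is, $\bar x\in k[[u]]^\times$. One can also see this directly: $x=a+tc$ with $a\in k[[u]]$ and $c\in E[[t]]$ is invertible in $\calO_2$ iff $a\in k[[u]]^\times$.

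The computation of the symbol is then immediate from the definition of the tame boundary. For $x,y\in\calO_2^\times$ we have $v_t(x)=v_t(y)=0$, so
\[
\partial_t\{x,y\}=(-1)^{0}\,\overline{x^{0}y^{0}}=1,
\]
and in particular $\nu_F\{x,y\}=v_u(1)=0$. A stronger fact holds without any condition on $\bar x,\bar y$: $\nu_F\{x,y\}=0$ whenever $v_t(x)=v_t(y)=0$. Hence every exponent in the product defining $\sigma_\alpha(b,b')$ is zero, and $\sigma_\alpha(b,b')=1$.

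The only step requiring care is the first one, that the diagonal entries of an upper triangular matrix in $\GL_n(\calO_2)$ are units of $\calO_2$. I will handle it via the triangular inverse argument above, or equivalently by noting that $\det b_L=\prod_r(b_L)_{rr}\in\calO_2^\times$ and that $\calO_2$ is a valuation ring, so a product of elements of $\calO_2$ that is a unit forces each factor to be a unit. Everything else is a direct substitution into the formula for $\partial_t$.
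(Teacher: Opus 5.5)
Your proposal is correct and follows the paper's proof: the diagonal entries of elements of $B_H\cap H_0$ are $t$-adic units, so $\partial_t\{x_r(b),y_r(b')\}=1$ and every exponent in $\sigma_\alpha$ vanishes. Your triangular-inverse argument that these entries lie in $\calO_2^\times$ justifies a step the paper only asserts; the finer description of $\calO_2^\times$ beyond $v_t=0$ is not needed, as you yourself note.
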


\begin{proof}
Every diagonal entry of $b$ and $b'$ is a $t$-adic unit. If $v_t(x)=v_t(y)=0$, the tame boundary $\partial_t\{x,y\}$ equals $1$, hence $\nu_F\{x,y\}=0$. Every exponent in the definition of $\sigma_\alpha$ therefore vanishes.
\end{proof}

For $C\in\calC_{H_0}$, put
\[
X_C=H_0/C,\qquad Q_0=B_H\cap H_0,
\qquad O_C=B_HH_0/C\subseteq S_C.
\]
We give a descent construction used below. The inclusion of action
groupoids
\[
\iota_C:Q_0\ltimes X_C\longrightarrow B_H\ltimes O_C
\]
is an equivalence. It is essentially surjective because every point of $O_C$
has the form $bkC$ with $b\in B_H$ and $k\in H_0$, and the arrow $b$ carries
$kC\in X_C$ to $bkC$. It is fully faithful because, if $a\in B_H$ carries
$kC$ to $k'C$ for $k,k'\in H_0$, then
\[
k'^{-1}ak\in C\subseteq H_0,
\]
so $a\in B_H\cap H_0=Q_0$. Thus the arrows between objects of $X_C$ in the
larger action groupoid are exactly the arrows already present in
$Q_0\ltimes X_C$.

In the standard trivializations of the multiplicative lines, write
$e_q\in L_\alpha(q)$ for $q\in Q_0$. Integral triviality gives
\[
m_{q,q'}(e_q\otimes e_{q'})=e_{qq'},
\qquad q,q'\in Q_0.
\]
Consequently the constant line $\underline{\Lambda}_{X_C}$ carries a
$\sigma_\alpha|_{Q_0}$-twisted equivariant structure with structure maps
\[
\theta_q(x):L_\alpha(q)\otimes\Lambda
\xrightarrow{\sim}\Lambda,
\qquad e_q\otimes\lambda\longmapsto\lambda,
\qquad x\in X_C.
\]
Indeed, the twisted descent identity
\[
\theta_{qq'}(x)\circ(m_{q,q'}\otimes\id)
=
\theta_q(q'x)\circ(\id\otimes\theta_{q'}(x))
\]
reduces to the displayed multiplicativity of the vectors $e_q$.
Here the vectors $e_q$ belong to the fixed presentation of the multiplicative
line $L_\alpha$; they are not choices of nonzero vectors in the fibers of the
object being constructed.

Transporting this twisted representation across $\iota_C$ gives a
$\sigma_\alpha$-twisted line bundle $\calL^{\mathrm{sph}}_{\alpha,C}$ on
$O_C$. Concretely, for $b\in B_H$ and $k\in H_0$, its fiber may be represented
as
\[
\bigl(\calL^{\mathrm{sph}}_{\alpha,C}\bigr)_{bkC}
\simeq L_\alpha(b)\otimes_\Lambda\Lambda.
\]
This description is independent of the presentation of the point. Namely, if
$bkC=b'k'C$, then $q=b'^{-1}b$ belongs to $Q_0$, satisfies $qkC=k'C$, and
$b=b'q$. The two descriptions of the fiber are identified by the span of
isomorphisms
\[
L_\alpha(b)\otimes\Lambda
\xleftarrow{\ m_{b',q}\otimes\id\ }
L_\alpha(b')\otimes L_\alpha(q)\otimes\Lambda
\xrightarrow{\ \id\otimes\theta_q(kC)\ }
L_\alpha(b')\otimes\Lambda.
\]
The cocycle identity for $\sigma_\alpha$, equivalently the twisted descent
identity above, makes these identifications compatible with a third
presentation. Since $O_C$ is $B_H$-stable, the line bundle can be extended by
zero to all of $S_C$, and the twisted equivariance maps extend uniquely across
the zero fibers.

This is the line-valued analogue of the usual spherical vector in the function
model of an induced representation: the value $1$ on the spherical support is
replaced by a one-dimensional line, while the scalar transformation law is
replaced by the multiplicative lines $L_\alpha(b)$ and their coherence maps.
The support and fiber dimensions do not depend on $\alpha$; the dependence on
$\alpha$ lies in the twisted descent data.

\begin{definition}\label{def:spherical-object}
	The spherical object $v_{\alpha,C}\in\calI_C(\alpha)$ is the extension by zero
	of $\calL^{\mathrm{sph}}_{\alpha,C}$ from $O_C$ to $S_C$. This definition fixes
	the line bundle and its descent data, but it does not choose a basis in each
	one-dimensional fiber.
\end{definition}

\begin{proposition}\label{prop:spherical-object}
For every $C\in\calC_{H_0}$, the object $v_{\alpha,C}$ is well defined, has one-dimensional fibers on $O_C$ and zero fibers off $O_C$, and is right $H_0$-equivariant. The objects are compatible with refinement, and any pair $(C,v_{\alpha,C})$ represents an object
\[
v_\alpha\in\calI_F^{\uSm}(\alpha)
\]
that is independent of $C$ up to the canonical refinement isomorphisms.
\end{proposition}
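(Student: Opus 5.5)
The plan is to handle the four assertions in order: well-definedness, the fiber description, right $H_0$-equivariance, and compatibility with refinement. Everything rests on the equivalence of action groupoids $\iota_C:Q_0\ltimes X_C\to B_H\ltimes O_C$ and on Proposition~\ref{prop:equiv-descent}.

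\emph{Well-definedness and fibers.} Since $\iota_C$ is an equivalence of groupoids, restriction along it is an equivalence between $\sigma_\alpha$-twisted representations of $B_H\ltimes O_C$ and $\sigma_\alpha|_{Q_0}$-twisted representations of $Q_0\ltimes X_C$. By Lemma~\ref{lem:integral-triviality}, the latter twist is trivial in the standard bases, so the constant line with the structure maps $\theta_q$ is a valid object.

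A quasi-inverse is given concretely by the span of isomorphisms displayed above. The cocycle identity checks that, for three presentations $bkC=b'k'C=b''k''C$, the composite of two identifications equals the third. This reduces to $m_{b'',q'}\circ(\id\otimes m_{q',q})$ versus $m_{b'',q'q}$ together with $\theta_{q'q}=\theta_{q'}\circ(\id\otimes\theta_q)$.

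Extension by zero is legitimate because $O_C$ is a union of $B_H$-orbits. Every arrow of $B_H\ltimes S_C$ either stays inside $O_C$ or stays inside its complement, so zero structure maps on the complement satisfy the twisted descent equation trivially. Through Proposition~\ref{prop:equiv-descent} this gives an object of $\calI_C(\alpha)$ with fibers $L_\alpha(b)\otimes\Lambda$, which are one-dimensional on $O_C$ and zero elsewhere.

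\emph{Right $H_0$-equivariance.} For $h\in H_0$, we have $hCh^{-1}=C$ because $C$ is normal in $H_0$. The map $r_h$ sends $O_C$ to itself, since $bkhC=b(kh)C$ with $kh\in H_0$.

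The pullback $R_h^C v_{\alpha,C}$ has fiber at $bkC$ equal to the fiber of $v_{\alpha,C}$ at $bkhC$, namely $L_\alpha(b)\otimes\Lambda$. We therefore take the identity on $L_\alpha(b)\otimes\Lambda$ as the isomorphism $\phi_h:R_h^Cv_{\alpha,C}\xrightarrow{\sim}v_{\alpha,C}$. This is compatible with the span identifications, because those involve only $b,b',q$, and right multiplication by $h$ commutes with the left $Q_0$-action on $X_C$. It is also compatible with the twisted $B_H$-structure, which is given by left multiplication by $m_{b_0,b}$.

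Coherence $\phi_{h_1h_2}=\phi_{h_1}\circ R_{h_1}(\phi_{h_2})$ is then immediate, since all maps are identities in these presentations.

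\emph{Refinement.} For $D\subseteq C$ in $\calC_{H_0}$, the projection $\pi_{D,C}$ maps $O_D$ onto $O_C$ and maps the complement of $O_D$ into the complement of $O_C$: if $gD\mapsto bkC$, then $g\in bkC\subseteq B_HH_0$. Hence $\pi^*_{D,C}v_{\alpha,C}$ has fibers $L_\alpha(b)\otimes\Lambda$ at $bkD$, with the same descent data. The identity maps therefore give a canonical isomorphism $\pi^*_{D,C}v_{\alpha,C}\cong v_{\alpha,D}$, and these isomorphisms satisfy transitivity for $E\subseteq D\subseteq C$.

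By Definition~\ref{def:split-pseudocolimit}, such isomorphisms make all the pairs $(C,v_{\alpha,C})$ canonically isomorphic in $\calI_F^{\uSm}(\alpha)$, so they define a single object $v_\alpha$. One must also note that $\calC_{H_0}\subseteq\calC_H$ and that $\calC_{H_0}$ is directed, so common refinements exist within it.

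\emph{Main obstacle.} The main obstacle is the bookkeeping in the well-definedness step. One has to verify that the fiberwise span identifications, the transported descent maps, and the right-translation isomorphisms are all mutually compatible. The plan is to work throughout in the chosen presentation $bkC$, reduce every compatibility to the cocycle identity for $\sigma_\alpha$, and use $\sigma_\alpha|_{Q_0}=1$, rather than arguing abstractly through the groupoid equivalence.
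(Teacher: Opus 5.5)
Your proposal is correct and follows essentially the same route as the paper. The descent data come from the groupoid equivalence $\iota_C$ together with integral triviality, right $H_0$-equivariance is given by identity maps on fibers, and refinement rests on the preimage of $O_C$ in $S_D$ being $O_D$; you write the identity maps in the presentation $bkC$ on $O_C$, whereas the paper writes them on the constant line over $X_C$ and transports across $\iota_C$, which amounts to the same thing. One notational slip: in the three-presentation check, $m_{b'',q'}\circ(\id\otimes m_{q',q})$ does not type-check, and the intended comparison is $m_{b'',q'q}\circ(\id\otimes m_{q',q})=m_{b',q}\circ(m_{b'',q'}\otimes\id)$ on $L_\alpha(b'')\otimes L_\alpha(q')\otimes L_\alpha(q)$, which is exactly the cocycle identity.
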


\begin{proof}
The descent statement and the assertions about the fibers were established above. For $k_0\in H_0$, normality of $C$ gives a right-translation bijection
\[
r_{k_0}:X_C\longrightarrow X_C,
\qquad xC\longmapsto xk_0C.
\]
It commutes strictly with the left $Q_0$-action. Hence the identity maps on the fibers of the constant line define a $Q_0$-equivariant isomorphism
\[
\varphi_{k_0}:r_{k_0}^{*}\underline{\Lambda}_{X_C}
\xrightarrow{\sim}
\underline{\Lambda}_{X_C}.
\]
Because all these maps are identities on the underlying copy of $\Lambda$, they satisfy the right-action coherence
\[
\varphi_{k_1k_2}
=
\varphi_{k_1}\circ r_{k_1}^{*}(\varphi_{k_2})
\]
literally. Transport across the equivalence $\iota_C$ and extension by zero therefore give a coherent right $H_0$-equivariant structure on $v_{\alpha,C}$.

If $D\subseteq C$ in $\calC_{H_0}$, the preimage of $O_C$ in $S_D$ is $O_D$. The constant twisted line on $X_C$ pulls back to the constant twisted line on $X_D$, and the identity right-translation maps pull back to the corresponding identity maps. Thus
\[
\pi_{D,C}^{*}v_{\alpha,C}\simeq v_{\alpha,D}
\]
compatibly with the $B_H$-descent and right $H_0$-equivariant structures. Consequently the objects $(C,v_{\alpha,C})$ are canonically isomorphic in the filtered pseudocolimit; we denote any one of them by $v_\alpha$.
\end{proof}

\section{Integration and spherical inner product }
\label{sec:fesenko-inner-product}

For $C\in\calC_H$, set
\[
N_H(C)=\{h\in H:hCh^{-1}=C\}.
\]
If $h\in N_H(C)$, right translation gives an exact endofunctor $R_h^C$ of
$\calI_C(\alpha)$.  Its fixed-level categorical trace is defined by
\[
\Tr_{\mathrm{cat}}(R_h^C)
:=\Nat\bigl(\id_{\calI_C(\alpha)},R_h^C\bigr).
\]

In this section, we give a positive Hermitian structure on a concrete spherical part of the
$2$-representation.   The required
integral on $\GL_n(F)$ is the general-rank integral constructed by Morrow from
Fesenko's integration on $F$
\cite{fesenko2003analysis,morrow2008integration,morrow2010integration}.
After the normalization adopted below, Waller's measure gives the same
rank-two volumes and bi-invariance
\cite[Theorem~5.5 and Corollaries~5.6--5.7]{waller2019measure}.
We use only a finite-step class of functions for which all integrals, changes of
variables, and positivity statements can be proved directly.

\subsection{Fesenko--Morrow integration, positivity, and unitary parameters}

\[
G=\GL_n(F),
\qquad
K_0=\GL_n(\calO_2),
\qquad
Q_G=B\cap K_0.
\]
For $a\in F^\times$, we define
\[
|a|_F
=
q^{-v_u(\overline{a t^{-v_t(a)}})}X^{v_t(a)}.
\]
This is the formal absolute value used in the lifted integration theory.
Identify $M_n(F)$ with $F^{n^2}$ by the standard matrix coordinates and use
the corresponding repeated product integral.  Morrow's space
$\mathcal L(G)$ consists of functions $\phi$ for which
$\tau\mapsto\phi(\tau)|\det\tau|_F^{-n}$ is the restriction of a function
$f_\phi$ in his Fubini class on $M_n(F)$.  Its integral is
\begin{equation}
\mathcal I_n(\phi)
=
\int_{M_n(F)}f_\phi(x)\,dx.
\label{eq:morrow-gl-n-integral}
\end{equation}
The definition is independent of the extension
\cite[Definition~5.2 and Remark~5.3]{morrow2008integration}.  Morrow proves
that this space is stable under left and right translations and that $\mathcal I_n$ is
invariant under both \cite[Proposition~5.4]{morrow2008integration}.  We extend scalars
along $\mathbb C(X)\hookrightarrow\Lambda$ whenever necessary.

Set
\[
d_{n,q}=\prod_{r=1}^n(1-q^{-r}),
\qquad
c_{n,q}=d_{n,q}^{-1},
\qquad
m_n=\frac{n(n-1)}2.
\]
For $i,j\ge1$, write
\[
J_{i,j}=u^it^j\calO_2,
\qquad
K_{i,j}=I_n+M_n(J_{i,j}),
\qquad
P_{i,j}=Q_GK_{i,j}.
\]
We order $\mathbb R((X))$ by the leading-coefficient convention: a nonzero
Laurent series is positive when the coefficient of its lowest nonzero power of
$X$ is positive.  Thus $X$ is a positive infinitesimal.

\begin{theorem}[Normalized Fesenko--Morrow volumes in arbitrary rank]
\label{thm:general-rank-fesenko-morrow}
For every $n\ge1$, the integration functional
\[
\mu_n=c_{n,q}\mathcal I_n: \mathcal L (G)  \to \mathbb C((X))
\]
is left and right translation invariant on Morrow's integrable class and is
normalized by
\[
\mu_n(K_0)=\mu_n (\mathbf 1_{K_0})=1.
\]
The characteristic functions of $K_0$, $K_{i,j}$, $P_{i,j}$, and all their
left or right translates belong to that class.  Moreover,
\begin{align}
\mu_n(K_{i,j})
&=c_{n,q}q^{-n^2i}X^{n^2j},
\label{eq:general-congruence-volume}\\
\mu_n(P_{i,j})
&=c_{n,q}(1-q^{-1})^nq^{-m_ni}X^{m_nj}.
\label{eq:general-borel-congruence-volume}
\end{align}
In particular, these values lie in the positive cone of
$\mathbb R((X))$.
\end{theorem}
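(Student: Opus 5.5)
The plan is to reduce everything to Fesenko's additive measure on $F$ and Morrow's definition \eqref{eq:morrow-gl-n-integral}, using the fact that all the sets in question lie inside $K_0$, where $|\det|_F=1$.

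\textbf{Invariance and membership in the integrable class.} Left and right invariance of $\mu_n$ is Morrow's theorem \cite[Proposition~5.4]{morrow2008integration}, together with the stability of $\mathcal L(G)$ under translations; multiplying by the constant $c_{n,q}$ changes nothing. So the task reduces to two things: showing $\mathbf 1_{K_0},\mathbf 1_{K_{i,j}},\mathbf 1_{P_{i,j}}\in\mathcal L(G)$ (translates then follow from that stability), and computing the integrals.

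Each of these sets $S$ lies in $K_0$. For $\tau\in K_0$ the determinant is a unit of $\calO_2$, so $v(\det\tau)=(0,0)$ and $|\det\tau|_F=1$. Hence I take $f_\phi=\mathbf 1_S$, regarded as a function on $M_n(F)=F^{n^2}$. I would then show that $\mathbf 1_S$ is a finite-step product function, i.e.\ a product over matrix coordinates of indicators of sets of the following forms:
\begin{itemize}
\item $\calO_2$;
\item $J_{i,j}$ or $1+J_{i,j}$;
\item $\calO_2^\times=\calO_2\setminus u\calO_2$;
\end{itemize}
or a finite disjoint union of such products. Functions of this kind lie in Fesenko's Fubini class, and their repeated integral is the product of the one-variable integrals. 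The one-variable inputs I would use, with Fesenko's normalization $\int\mathbf 1_{\calO_2}=1$, are
\[
\int \mathbf 1_{a+u^it^j\calO_2}=q^{-i}X^j,
\qquad
\int\mathbf 1_{\calO_2^\times}=1-q^{-1}.
\]

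\textbf{The volume of $K_0$.} The ring $\calO_2$ is local with maximal ideal $u\calO_2$ and residue field $k$. An integral matrix is invertible over $\calO_2$ exactly when its reduction lies in $\GL_n(k)$. So $K_0$ is the disjoint union of $|\GL_n(k)|$ translates of $M_n(u\calO_2)$. Each translate has measure $q^{-n^2}$, giving
\[
\mathcal I_n(\mathbf 1_{K_0})=|\GL_n(k)|\,q^{-n^2}=\prod_{r=1}^n(1-q^{-r})=d_{n,q},
\]
so $\mu_n(K_0)=1$.

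\textbf{The volume of $K_{i,j}$.} The set $K_{i,j}=I_n+M_n(J_{i,j})$ is a product of $n^2$ cosets of $J_{i,j}$. Its volume is therefore $(q^{-i}X^j)^{n^2}$, which gives \eqref{eq:general-congruence-volume}.

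\textbf{The volume of $P_{i,j}$ (the main obstacle).} Here the coset decomposition fails: $Q_G/(Q_G\cap K_{i,j})$ is infinite by Proposition~\ref{prop:congruence-infinite}. I would instead prove the set-theoretic identity
\[
P_{i,j}=\{g\in K_0:\ g_{rs}\in J_{i,j}\text{ for }r>s\}.
\]
The inclusion ``$\subseteq$'' holds because $J_{i,j}$ is an ideal. For ``$\supseteq$'' I would use an Iwahori-type factorization $g=bk$ with $b\in Q_G$ and $k\in K_{i,j}$. Write $g=g_0+g_-$, where $g_-$ is the strictly lower-triangular part, which lies in $M_n(J_{i,j})$. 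Then $g_0$ is upper triangular, and it is invertible in $K_0$ because $\det g\equiv\det g_0\bmod J_{i,j}$. Its diagonal entries must then be units, since $\det g_0$ is their product. Put $b=g_0$ and $k=I_n+g_0^{-1}g_-\in K_{i,j}$.

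Granting this, the diagonal entries of $g$ are forced to be units (reduce mod $J_{i,j}$ to an upper triangular matrix and use the determinant). So $P_{i,j}$ is the product of:
\begin{itemize}
\item $n$ copies of $\calO_2^\times$ (the diagonal);
\item $m_n$ copies of $\calO_2$ (above the diagonal);
\item $m_n$ copies of $J_{i,j}$ (below the diagonal).
\end{itemize}
Multiplying the one-variable measures gives $(1-q^{-1})^n\,q^{-m_ni}X^{m_nj}$, and hence \eqref{eq:general-borel-congruence-volume} after multiplying by $c_{n,q}$.

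\textbf{Positivity.} Each value is a positive real multiple of a power of $X$, so it lies in the positive cone of $\mathbb R((X))$. The delicate points to verify carefully are the factorization above and that Fesenko's integral is finitely additive on these infinite-index unions, i.e.\ that $\mathbf 1_{\calO_2^\times}$ integrates to $1-q^{-1}$ as $\mathbf 1_{\calO_2}-\mathbf 1_{u\calO_2}$.
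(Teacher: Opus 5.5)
Your proposal is correct and follows essentially the same route as the paper: Morrow's bi-invariance, the product-of-$n^2$-cosets computation for $K_{i,j}$, and the same box description of $P_{i,j}$, obtained by factoring off the upper-triangular part of $g$. The only difference is in $\mu_n(K_0)$: you decompose $K_0$ directly in $F$ into $|\GL_n(k)|$ cosets of $M_n(u\calO_2)$, whereas the paper gets $d_{n,q}$ from Morrow's lifting comparison with the Haar volume of $\GL_n(k[[u]])$ in $E$. Both come down to the same count.
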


\begin{proof}
Bi-invariance is Morrow's change-of-variables theorem just cited.  His lifting
comparison for $G$ identifies the integral of the lift of a residue-field
function with its Haar integral
\cite[Proposition~5.8]{morrow2008integration}.  Normalize additive Haar
measure on $E=k((u))$ by $\operatorname{vol}(k[[u]])=1$.  Then
\[
\operatorname{vol}\bigl(\GL_n(k[[u]])\bigr)
=
\frac{|\GL_n(k)|}{q^{n^2}}
=
\prod_{r=1}^n(1-q^{-r})
=d_{n,q}.
\]
The zero extension of $\mathbf1_{\GL_n(k[[u]])}$ to $M_n(E)$ is a
Schwartz--Bruhat function.  Its Morrow lift is exactly
$\mathbf1_{K_0}$,  because  we have     $\pi : \GL_n(\mathcal O_1) \to \GL_n(E), \pi^{-1}(\GL_n(k[[u]])) = \GL_n(\mathcal O_2)$.    Since
$|\det g|_F=1$  for  $ g \in K_0$,   \cite[Proposition~5.8]{morrow2008integration} gives
$\mathcal I_n(\mathbf1_{K_0})=d_{n,q}$. Thus $\mu_n(K_0)= c_{n,q}  \mathcal I_n(\mathbf1_{K_0})= d_{n,q}^{-1} d_{n,q }=1$ proving the normalization.

 By Fesenko's basic volume formula
\cite[Section~1]{fesenko2003analysis},  we have
\begin{equation}
\operatorname{vol}_F(a+J_{i,j})=q^{-i}X^j.
\label{eq:fesenko-basic-box}
\end{equation}
Every element of $K_{i,j}$ has determinant of formal absolute value one, and
$K_{i,j}$ is the affine product of $n^2$ copies of $J_{i,j}$ around $I_n$.
Its characteristic function, extended by zero to $M_n(F)$, is the lift 
centered at $I_n$ and $t$-depth $j$ in every coordinate of the Schwartz--Bruhat
function $\mathbf1_{M_n(u^ik[[u]])}$ on $M_n(E)$.  It therefore belongs to
Morrow's matrix Fubini class
\cite[Definition~4.3 and Remark~4.2]{morrow2008integration}; since the
determinant factor is one on its support, it defines an element of
$\mathcal L(G)$.  The repeated lifting formula
\cite[Remark~3.9]{morrow2008integration} gives
\[
\mathcal I_n(\mathbf1_{K_{i,j}})
=(q^{-i}X^j)^{n^2},
\]
hence  $\mu_n(K_{i,j})= c_{n,q} \mathcal I_n(\mathbf 1_{K_{i,j}}) =c_{n,q}q^{-n^2i}X^{n^2j}$, which proves \eqref{eq:general-congruence-volume}.

We next claim that
\begin{equation}
P_{i,j}
=
\left\{
(a_{rs})\in M_n(F):
\begin{array}{ll}
a_{rr}\in\calO_2^\times,&1\le r\le n,\\
a_{rs}\in\calO_2,&r<s,\\
a_{rs}\in J_{i,j},&r>s
\end{array}
\right\}.
\label{eq:borel-congruence-box}
\end{equation}
The inclusion from left to right follows from the ideal property of
$J_{i,j}$.  Conversely, for a matrix $a$ in the displayed set, let $q$ be
its upper-triangular part.  The diagonal entries of $q$ are units, so
$q\in Q_G$, while
$q^{-1}a\equiv I_n\pmod{M_n(J_{i,j})}$.  Thus
$q^{-1}a\in K_{i,j}$, proving the claim.

Again $|\det a|_F=1$ throughout this set.  Each diagonal coordinate has
volume $1-q^{-1}$, each upper off-diagonal coordinate has volume one, and
each of the $m_n$ lower coordinates has volume $q^{-i}X^j$.  Extended by zero
to $M_n(F)$, the characteristic function in
\eqref{eq:borel-congruence-box} is the mixed-coordinate lift of the product of
$\mathbf1_{k[[u]]^\times}$ on the diagonal,
$\mathbf1_{k[[u]]}$ above the diagonal, and
$\mathbf1_{u^ik[[u]]}$ below the diagonal, using $t$-depth zero on and above
the diagonal and $t$-depth $j$ below it.  This residue-field function is
Schwartz--Bruhat and hence GL-Fubini, so its lift belongs to Morrow's matrix
Fubini class \cite[Definition~4.3 and Remark~4.2]{morrow2008integration}.
The lifting formula \cite[Remark~3.9]{morrow2008integration} therefore gives
\[
\mathcal I_n(\mathbf1_{P_{i,j}})
=(1-q^{-1})^n(q^{-i}X^j)^{m_n}.
\]
This proves \eqref{eq:general-borel-congruence-volume}.  Stability of the
translated characteristic functions follows from bi-invariance of the
integrable class.
\end{proof}

\begin{remark}
	The result  of $\mathbb R((X))$-measure of $\GL_n(F)$ of a two-dimensional local field $F$  was already obtained  by Waller \cite{waller2019approach}.
\end{remark}

Return to the doubled integral subgroup
\[
H_0=K_{0,L}\times K_{0,R},
\qquad
Q_0=Q_{G,L}\times Q_{G,R}=B_H\cap H_0.
\]
On finite sums of products of the functions occurring in
Theorem~\ref{thm:general-rank-fesenko-morrow}, define
\[
\mu_0=\mu_n\boxtimes\mu_n.
\]
Thus $\mu_0(H_0)=1$, and $\mu_0$ is bi-invariant under $H_0$.  Put
\[
C_{i,j}^{\mathrm{dbl}}=K_{i,j,L}\times K_{i,j,R},
\qquad
P_{i,j}^{\mathrm{dbl}}
=Q_0C_{i,j}^{\mathrm{dbl}}
=P_{i,j,L}\times P_{i,j,R}.
\]

\begin{corollary}[Doubled general-rank volumes]
\label{cor:doubled-general-rank-volumes}
For every $i,j\ge1$,
\begin{align}
\mu_0(C_{i,j}^{\mathrm{dbl}})
&=c_{n,q}^2q^{-2n^2i}X^{2n^2j},
\label{eq:doubled-congruence-volume}\\
\mu_0(P_{i,j}^{\mathrm{dbl}})
&=c_{n,q}^2(1-q^{-1})^{2n}
  q^{-2m_ni}X^{2m_nj}.
\label{eq:doubled-borel-congruence-volume}
\end{align}
\end{corollary}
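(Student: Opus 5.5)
The corollary follows by multiplicativity of the product functional $\mu_0=\mu_n\boxtimes\mu_n$ on product sets, combined with the single-factor volumes of Theorem~\ref{thm:general-rank-fesenko-morrow}.

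First I will note that $C_{i,j}^{\mathrm{dbl}}=K_{i,j,L}\times K_{i,j,R}$ and $P_{i,j}^{\mathrm{dbl}}=P_{i,j,L}\times P_{i,j,R}$. Hence their characteristic functions on $H_0\subseteq G_L\times G_R$ are exterior products
\[
\mathbf 1_{C_{i,j}^{\mathrm{dbl}}}=\mathbf 1_{K_{i,j}}\boxtimes\mathbf 1_{K_{i,j}},\qquad
\mathbf 1_{P_{i,j}^{\mathrm{dbl}}}=\mathbf 1_{P_{i,j}}\boxtimes\mathbf 1_{P_{i,j}}.
\]
Each tensor factor is one of the functions shown in Theorem~\ref{thm:general-rank-fesenko-morrow} to lie in Morrow's class $\mathcal L(G)$. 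So both characteristic functions belong to the domain on which $\mu_0$ was defined, namely finite sums of products of such functions.

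Second, by the definition of $\mu_0$ on pure tensors, $\mu_0(f_L\boxtimes f_R)=\mu_n(f_L)\mu_n(f_R)$. Substituting \eqref{eq:general-congruence-volume} gives
\[
\mu_0(C_{i,j}^{\mathrm{dbl}})=\bigl(c_{n,q}q^{-n^2i}X^{n^2j}\bigr)^2,
\]
which is \eqref{eq:doubled-congruence-volume}. Substituting \eqref{eq:general-borel-congruence-volume} gives
\[
\mu_0(P_{i,j}^{\mathrm{dbl}})=\bigl(c_{n,q}(1-q^{-1})^nq^{-m_ni}X^{m_nj}\bigr)^2,
\]
which is \eqref{eq:doubled-borel-congruence-volume}.

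The only point needing care is that $\mu_0$ is well defined on finite sums of products. A given function may admit several such expressions, and the value must not depend on the choice. I would handle this by noting that $\mu_0$ is the $\mathbb C((X))$-bilinear extension of $\mu_n\otimes\mu_n$ from the algebraic tensor product of the relevant function spaces. Linearity of $\mu_n$ makes this extension well defined on the tensor product. For the two sets in question, the product expression is canonical in any case, so the computation requires no more than evaluating on a single pure tensor.
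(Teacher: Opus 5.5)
Your proposal is correct and takes the same route as the paper: it multiplies the two one-factor identities of Theorem~\ref{thm:general-rank-fesenko-morrow}, using $\mu_0=\mu_n\boxtimes\mu_n$ on the product sets $K_{i,j,L}\times K_{i,j,R}$ and $P_{i,j,L}\times P_{i,j,R}$. Your added remark on why $\mu_0$ is well defined on finite sums of products is a harmless refinement that the paper leaves implicit; it holds because the algebraic tensor product of function spaces injects into functions on the product.
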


\begin{proof}
Both identities are the products of the corresponding one-factor identities
in Theorem~\ref{thm:general-rank-fesenko-morrow}.
\end{proof}

 We recall that $\calC_{H_0}$ is the family of finite
intersections of the groups $C_{i,j}^{\mathrm{dbl}}$,  that every member is normal
in $H_0$, and that
\[
X_C=H_0/C.
\]
The special form of the rank-two ideals makes this family particularly
simple.

\begin{lemma}[The congruence chain]
\label{lem:congruence-chain}
The ideals $J_{i,j}$, and hence the groups $C_{i,j}^{\mathrm{dbl}}$ and
$P_{i,j}^{\mathrm{dbl}}$, form a chain.  Explicitly,
\[
J_{i',j'}\subseteq J_{i,j}
\quad\Longleftrightarrow\quad
j'>j\ \text{or}\ (j'=j\ \text{and}\ i'\ge i).
\]
Consequently every member of $\calC_{H_0}$ is one of the standard groups
$C_{i,j}^{\mathrm{dbl}}$.  If $i'\ge i$ and the $t$-depth is fixed, then
\begin{equation}
[P_{i,j}^{\mathrm{dbl}}:P_{i',j}^{\mathrm{dbl}}]
=q^{2m_n(i'-i)}.
\label{eq:same-depth-index}
\end{equation}
\end{lemma}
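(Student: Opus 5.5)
The plan is to read everything off the rank-two valuation $v$ and the lexicographic order of Section~2. Since $\calO_2=\{0\}\cup\{x:v(x)\ge(0,0)\}$ and $v(u^it^j)=(i,j)$, multiplicativity of $v$ gives
\[
J_{i,j}=u^it^j\calO_2=\{0\}\cup\{x\in F^\times: v(x)\ge (i,j)\},
\]
which can also be checked directly from $J_{i,j}=u^it^jk[[u]]+t^{j+1}E[[t]]$.

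For the chain statement:
\begin{itemize}
\item If $(i',j')\ge(i,j)$ lexicographically, that is, $j'>j$ or ($j'=j$ and $i'\ge i$), the description above gives $J_{i',j'}\subseteq J_{i,j}$.
\item Conversely, if $(i',j')<(i,j)$, then $u^{i'}t^{j'}\in J_{i',j'}$ but $u^{i'}t^{j'}\notin J_{i,j}$, so the inclusion fails.
\item Since the lexicographic order is total, the ideals form a chain.
\end{itemize}
The groups $K_{i,j}=I_n+M_n(J_{i,j})$ are determined by their ideals, and inclusions of ideals induce inclusions of groups. Hence the $C_{i,j}^{\mathrm{dbl}}$ form a chain, and so do the $P_{i,j}^{\mathrm{dbl}}=Q_0C_{i,j}^{\mathrm{dbl}}$. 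A finite intersection of members of a chain is its smallest member, so every element of $\calC_{H_0}$ equals some standard $C_{i,j}^{\mathrm{dbl}}$.

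For the index formula~\eqref{eq:same-depth-index}, I work one factor at a time and multiply at the end. First I show the index is finite. Since $P_{i',j}=Q_GK_{i',j}\supseteq K_{i',j}$ and $P_{i,j}=Q_GK_{i,j}$, every left coset of $P_{i',j}$ in $P_{i,j}$ has a representative in $K_{i,j}$. Thus the cosets are a quotient of $K_{i,j}/K_{i',j}$. That set is in bijection with $M_n(J_{i,j}/J_{i',j})$, because $(I+A)^{-1}(I+A')\in I+M_n(J_{i',j})$ exactly when $A\equiv A'$ modulo $J_{i',j}$. Using the explicit forms of the ideals, $J_{i,j}/J_{i',j}\cong u^ik[[u]]/u^{i'}k[[u]]$, which has $q^{i'-i}$ elements. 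So the index is finite.

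Next I compute the index by volumes. The group $P_{i,j}$ is a finite disjoint union of left translates $gP_{i',j}$. By Theorem~\ref{thm:general-rank-fesenko-morrow}, these translates lie in Morrow's integrable class and all have volume $\mu_n(P_{i',j})$. Finite additivity of the linear functional $\mu_n$ then gives
\[
[P_{i,j}:P_{i',j}]=\mu_n(P_{i,j})/\mu_n(P_{i',j}).
\]
By~\eqref{eq:general-borel-congruence-volume} this ratio is $q^{m_n(i'-i)}$; the $X^{m_nj}$ factors cancel because the $t$-depth is fixed. The doubled group is a product, so its index is the square, $q^{2m_n(i'-i)}$.

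As a cross-check I would also count directly. By~\eqref{eq:borel-congruence-box}, membership in $P_{i,j}$ differs from membership in $P_{i',j}$ only in the $m_n$ lower entries, modulo $J_{i',j}$, and each such entry contributes $q^{i'-i}$ classes.

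The main obstacle is the finite-additivity step. Theorem~\ref{thm:general-rank-fesenko-morrow} only asserts translation invariance on Morrow's class, so I must check that a finite sum of characteristic functions of disjoint translates stays in that class and that $\mu_n$ is additive on it. Both follow from linearity of the class and of $\mathcal I_n$. If that is doubtful, I fall back on the direct coset count through~\eqref{eq:borel-congruence-box}, reducing lower-triangular entries modulo $J_{i',j}$ after left multiplication by $Q_G$.
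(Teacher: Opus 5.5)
Your argument is correct. For the chain statement it is the paper's argument in different form: you use the rank-two valuation $v$ and the $t$-first order, while the paper reads the inclusions off $J_{i,j}=u^it^jk[[u]]+t^{j+1}E[[t]]$. Your witness $u^{i'}t^{j'}$ also makes explicit the converse that the paper calls immediate.

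For \eqref{eq:same-depth-index} the routes genuinely differ.
\begin{itemize}
\item The paper counts algebraically. Put $R=\calO_2/J_{i',j}$ and $\mathfrak a=J_{i,j}/J_{i',j}$, so $\mathfrak a^2=0$ and $|\mathfrak a|=q^{i'-i}$. The images of $P_{i,j}$ and $P_{i',j}$ in $\GL_n(R)$ both lie over the upper Borel of $R/\mathfrak a$, with fibers of sizes $|\mathfrak a|^{n^2}$ and $|\mathfrak a|^{n(n+1)/2}$, and the index is the ratio.
\item You prove only finiteness algebraically, then obtain the index as $\mu_n(P_{i,j})/\mu_n(P_{i',j})$ from translation invariance and linearity of $\mathcal I_n$.
\end{itemize}

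Your route is legitimate: Theorem~\ref{thm:general-rank-fesenko-morrow} is proved before the lemma and does not use it, so nothing is circular. But it makes a finite counting fact depend on the whole Fesenko--Morrow input: Morrow's lifting formula behind \eqref{eq:general-borel-congruence-volume}, bi-invariance, and linearity of the integral. The paper's count is integration-free. There, the index formula and the volume formula are independent facts, and their compatibility $\mu_n(P_{i,j})=[P_{i,j}:P_{i',j}]\,\mu_n(P_{i',j})$ is a real consistency check rather than a tautology.

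Your finiteness step is in fact one line from the paper's count. Since $P_{i,j}=K_{i,j}P_{i',j}$, the index equals $[K_{i,j}:K_{i,j}\cap P_{i',j}]$. Modulo $K_{i',j}$, this is the ratio of $|M_n(\mathfrak a)|=|\mathfrak a|^{n^2}$ to the number $|\mathfrak a|^{n(n+1)/2}$ of classes whose strictly lower part vanishes, which is $|\mathfrak a|^{m_n}$. That is the rigorous form of your ``cross-check.'' As written (``each lower entry contributes $q^{i'-i}$ classes'') it is only heuristic, because both groups are infinite.
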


\begin{proof}
Using
\[
J_{i,j}=u^it^jk[[u]]+t^{j+1}E[[t]],
\]
the asserted inclusions are immediate.  Finite intersections therefore equal
the smallest member involved.  For the index in one factor, put
$R=\calO_2/J_{i',j}$ and
$\mathfrak a=J_{i,j}/J_{i',j}$.  Then
$\mathfrak a^2=0$ and
$|\mathfrak a|=q^{i'-i}$.  The image of $P_{i,j}$ in $\GL_n(R)$ is the
inverse image of the upper Borel over $R/\mathfrak a$, whereas the image of
$P_{i',j}$ is the upper Borel over $R$.  The first group has
$|\mathfrak a|^{n^2}$ lifts over each residue matrix and the second has
$|\mathfrak a|^{n(n+1)/2}$ such lifts.  Their quotient therefore has
cardinality
$|\mathfrak a|^{n^2-n(n+1)/2}=q^{m_n(i'-i)}$.
Taking the two factors gives \eqref{eq:same-depth-index}.
\end{proof}

For later use, the standard family $\calC_{H_0}$ is cofinal in
$\calC_H$.  Indeed, for fixed $h=(h_L,h_R)\in H$, the two $F$-linear maps
$A\mapsto h_L^{-1}Ah_L$ and $A\mapsto h_R^{-1}Ah_R$ on $M_n(F)$ have
only finitely many matrix coefficients.  Choose $N\ge0$ so that multiplying
every nonzero coefficient by $t^N$ puts it in $\calO_2$.  Then
\[
C_{i,j+N}^{\mathrm{dbl}}
\subseteq hC_{i,j}^{\mathrm{dbl}}h^{-1}
\qquad(i,j\ge1).
\]
Taking a common deeper standard level for finitely many conjugates proves
cofinality.

Fix $C\in\calC_{H_0}$.  For every $E\in\calC_{H_0}$ with $C\subseteq E$,
put
\[
P_E=Q_0E.
\]
Since $E$ is normal in $H_0$, this is a subgroup.  For $g\in H_0$, the subset
$P_Eg/C\subseteq X_C$ is left $Q_0$-invariant.  Define
\begin{equation}
\mathcal A_C^{\mathrm{cyl}}
=
\operatorname{span}_{\Lambda}
\left(
\mathbf1_{X_C},
\ \mathbf1_{P_Eg/C}
\;\middle|\;
C\subseteq E,\ g\in H_0
\right)
\subseteq
\operatorname{Fun}(X_C,\Lambda)^{Q_0}.
\label{eq:cylindrical-algebra}
\end{equation}
For $f\in\mathcal A_C^{\mathrm{cyl}}$, define
\begin{equation}
\tau_C(f)
=
\int_{H_0} f(kC)\,d\mu_0(k).
\label{eq:cylindrical-trace}
\end{equation}
The pullback to $H_0$ is a finite linear combination of characteristic
functions covered by Theorem~\ref{thm:general-rank-fesenko-morrow}, so this
integral is defined without any additional hypothesis.

To formulate positivity, recall that $\Lambda=\mathbb C((X))$ and equip it
with the coefficientwise involution
\[
\left(\sum_{m\ge m_0}a_mX^m\right)^\dagger
=
\sum_{m\ge m_0}\overline{a_m}X^m,
\qquad X^\dagger=X.
\]

\begin{proposition}[Faithful positive cylindrical trace]
\label{prop:positive-cylindrical-trace}
The space $\mathcal A_C^{\mathrm{cyl}}$ is a unital commutative
$\dagger$-algebra under pointwise operations.  It is stable under right
translation by $H_0$.  The functional $\tau_C$ is unital,
$\dagger$-compatible, right-translation invariant, and faithful positive:
\begin{equation}
\tau_C(f^\dagger f)>0
\qquad
\text{for every }0\ne f\in\mathcal A_C^{\mathrm{cyl}}.
\label{eq:faithful-positivity}
\end{equation}
If $D\subseteq C$ in $\calC_{H_0}$, then pullback along
$X_D\to X_C$ maps $\mathcal A_C^{\mathrm{cyl}}$ into
$\mathcal A_D^{\mathrm{cyl}}$ and satisfies
\begin{equation}
\tau_D(\pi_{D,C}^*f)=\tau_C(f).
\label{eq:cylindrical-refinement-trace}
\end{equation}
\end{proposition}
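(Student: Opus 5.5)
The key observation is that, by Lemma~\ref{lem:congruence-chain}, the groups $E\in\calC_{H_0}$ with $C\subseteq E$ form a chain. So every generator $\mathbf1_{P_Eg/C}$ is the indicator of a coset-type subset of the finite-or-infinite set $Q_0\backslash X_C$. I would first prove the algebra statement by showing that indicators of the sets $P_Eg/C$ are closed under products. For two such sets $P_Eg/C$ and $P_{E'}g'/C$, assume without loss of generality that $E'\subseteq E$, so $P_{E'}\subseteq P_E$. The sets $P_Eg$ are right cosets of the subgroup $P_E$ of $H_0$, and $P_{E'}g'$ is contained in a single right coset of $P_E$. Hence the intersection is either empty or equal to $P_{E'}g'/C$. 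Together with $\mathbf1_{X_C}$ as unit, the span is therefore a unital commutative algebra. It is stable under $\dagger$ because the generators are $\{0,1\}$-valued and $\dagger$ acts coefficientwise. It is stable under right translation by $k\in H_0$ because $P_Eg k/C$ is again a generator. Pullback to $X_D$ sends $\mathbf1_{P_Eg/C}$ to $\mathbf1_{P_Eg/D}$, since $D\subseteq C\subseteq E$; this gives $\pi^*_{D,C}\mathcal A_C^{\mathrm{cyl}}\subseteq\mathcal A_D^{\mathrm{cyl}}$.

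Next I would verify the properties of $\tau_C$. The lift of $\mathbf1_{P_Eg/C}$ to $H_0$ is $\mathbf1_{P_Eg}$, a right translate of $P_E$. Since $E$ is some $C^{\mathrm{dbl}}_{i,j}$, we have $P_E=P^{\mathrm{dbl}}_{i,j}$, and Corollary~\ref{cor:doubled-general-rank-volumes} together with bi-invariance gives $\tau_C(\mathbf1_{P_Eg/C})=\mu_0(P_E)$. Unitality is $\mu_0(H_0)=1$. Right invariance follows from right invariance of $\mu_0$. The refinement identity \eqref{eq:cylindrical-refinement-trace} is immediate, because both sides integrate the same function on $H_0$. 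For $\dagger$-compatibility, the values $\mu_0(P_E)$ lie in $\mathbb R((X))$, so they are fixed by $\dagger$, and $\tau_C$ is $\Lambda$-linear; hence $\tau_C(f^\dagger)=\tau_C(f)^\dagger$.

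For faithful positivity, which is the main point, I would rewrite $f$ in a disjoint basis. Given $0\neq f$, only finitely many generators occur. Let $E_0$ be the smallest of the finitely many $E$ involved, which exists since they form a chain. Every set $P_Eg$ occurring is a union of right $P_{E_0}$-cosets. The finitely many such cosets meeting the supports of the generators can be grouped: each $P_Eg$ is a finite disjoint union of $P_{E_0}$-cosets, by the index formula \eqref{eq:same-depth-index} or by the finite quotient when the depths differ. Similarly, $\mathbf1_{X_C}$ is $\mathbf1_{P_{E_0}\backslash H_0}$ summed over cosets, and this may be an infinite family.

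This coset bookkeeping is the expected obstacle. The index $[H_0:P_{E_0}]$ may be infinite, by Proposition~\ref{prop:congruence-infinite}, so $\mathbf1_{X_C}$ is not a finite sum of cosets. I would handle this by writing
\[
f=\lambda\mathbf1_{X_C}+\sum_{\ell=1}^N c_\ell\mathbf1_{P_{E_0}g_\ell/C},
\]
with distinct cosets $P_{E_0}g_\ell$. Then
\[
f^\dagger f=|\lambda|^2\mathbf1+\sum_\ell\bigl(\lambda^\dagger c_\ell+c_\ell^\dagger\lambda+c_\ell^\dagger c_\ell\bigr)\mathbf1_{P_{E_0}g_\ell},
\]
and we get
\[
\tau_C(f^\dagger f)=|\lambda|^2+\mu_0(P_{E_0})\sum_\ell\bigl(|\lambda+c_\ell|^2-|\lambda|^2\bigr),
\]
where $|z|^2:=z^\dagger z\in\mathbb R((X))$.

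If $\lambda=0$, the result is $\mu_0(P_{E_0})\sum|c_\ell|^2$, which is strictly positive because $\mu_0(P_{E_0})$ is positive. For $\lambda\neq0$, I expect the infinitesimal size of $\mu_0(P_{E_0})$ to be needed, whether it has positive $X$-order or, at $j$-depth giving $X^{2m_nj}$, order $\ge 0$ with a $q$-power factor $<1$. One should then compare leading terms carefully: the term $|\lambda|^2$ has leading order $2\operatorname{ord}\lambda$, and any negative contributions need to be dominated. If this leading-order comparison fails in the constant-coefficient case, the fallback is to use the honest identity
\[
\tau_C(f^\dagger f)=\Bigl(1-N\mu_0(P_{E_0})\Bigr)|\lambda|^2+\mu_0(P_{E_0})\sum_\ell|\lambda+c_\ell|^2,
\]
which is valid since the cosets are disjoint. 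Its positivity reduces to $N\mu_0(P_{E_0})\le1$, with strictness handled separately. This inequality holds because the $N$ disjoint cosets lie in $H_0$, and finite additivity of $\mu_0$ together with positivity of the complementary measure (the complement of finitely many cosets of $P_{E_0}$ is itself a finite union of cosets of the coarser groups in the chain, and $\mu_0(H_0)=1$) gives $N\mu_0(P_{E_0})\le 1$, with equality only if the cosets exhaust $H_0$. In that equality case $f$ is a finite combination of disjoint indicators and the first formula applies.
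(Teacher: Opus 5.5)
Your proofs of the algebra structure, right-translation stability, unitality, $\dagger$-compatibility, invariance and the refinement identity agree with the paper's. The gap is in faithful positivity, where you claim that each occurring $P_Eg$ is a finite disjoint union of $P_{E_0}$-cosets, ``by the finite quotient when the depths differ''. That claim is false.

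Take $n\ge2$ and $E=C^{\mathrm{dbl}}_{i,j}\supseteq E_0=C^{\mathrm{dbl}}_{i',j'}$ with $j'>j$. If the index $M=[P_E:P_{E_0}]$ were finite, right invariance and additivity would give $\mu_0(P_E)=M\mu_0(P_{E_0})$. But \eqref{eq:doubled-borel-congruence-volume} gives the ratio $q^{2m_n(i'-i)}X^{-2m_n(j'-j)}$, which is not an integer. Concretely, for $n=2$ the right $P_{i',j'}$-cosets in $P_{i,j}$ are indexed by $J_{i,j}/J_{i',j'}$, which contains the infinite image of $t^{j+1}k((u))[[t]]$.

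So the infinite index you noticed for $\mathbf1_{X_C}$ occurs at every change of $t$-depth. Your normal form $f=\lambda\mathbf1_{X_C}+\sum_{\ell\le N}c_\ell\mathbf1_{P_{E_0}g_\ell/C}$ forces $f$ to be constant on all but finitely many $P_{E_0}$-cosets, so it is not available in general. Already $f=\mathbf1_{P_E/C}-\mathbf1_{P_{E_0}/C}$ with different depths is not of that form. Its positivity, $\mu_0(P_E)-\mu_0(P_{E_0})>0$, is exactly the kind of statement your argument never reaches.

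Your justification of $N\mu_0(P_{E_0})\le1$ has the same defect. For $n\ge2$ every $P_E$ has infinite index in $H_0$. By B.~H.~Neumann's covering lemma, the complement of finitely many $P_{E_0}$-cosets in $H_0$ is therefore not a finite union of chain cosets. In any case, ``positivity of the complementary measure'' is precisely what must be proved.

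The inequality holds for a simpler reason. Since $j\ge1$, $\mu_0(P_{E_0})$ has $X$-order $2m_nj\ge2$, so $1-N\mu_0(P_{E_0})$ has leading term $1$. For $n=1$ one has $P_E=H_0$, and the algebra is just $\Lambda\mathbf1_{X_C}$.

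What is missing is the multi-level version of your computation, which is the paper's argument:
\begin{enumerate}
\item The finitely many cosets that occur form a laminar family. Hence $f$ is a step function on the atoms $A=P_Eg\setminus\bigsqcup_\ell P_{E_\ell}g_\ell$ together with the complementary atom of $H_0$.
\item Consequently $\tau_C(f^\dagger f)=\sum_A a_A^\dagger a_A\,\mu_0(A)$.
\item A nonempty atom has positive measure. Removed subcosets of larger $t$-depth have strictly larger $X$-order, so they do not affect the leading term. Removed subcosets of the same $t$-depth, refined to a common $u$-level via \eqref{eq:same-depth-index}, number fewer than the full finite index.
\item The complementary atom in $H_0$ has leading term $1$.
\end{enumerate}
Your fallback identity is the one-level case of this computation.
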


\begin{proof}
Cosets of two subgroups in the chain of Lemma~\ref{lem:congruence-chain} are
either disjoint or one contains the other.  Hence the intersection of two
sets occurring in \eqref{eq:cylindrical-algebra} is empty or another such
coset.  This proves closure under products; closure under $\dagger$ is
coefficientwise.  Right translation sends $P_Eg/C$ to $P_Egh/C$.
Unitality follows from $\mu_0(H_0)=1$, and right invariance follows from
bi-invariance of the product Morrow integral.  Every generating coset has the
real volume displayed in Corollary~\ref{cor:doubled-general-rank-volumes};
by linearity this also gives
$\tau_C(f^\dagger)=\tau_C(f)^\dagger$.

It remains to prove strict positivity.  If $n=1$, then
$P_{i,j}^{\mathrm{dbl}}=H_0$ for every $i,j$, so
$\mathcal A_C^{\mathrm{cyl}}=\Lambda\mathbf1_{X_C}$ and the assertion is
immediate.  Assume $n\ge2$.  A finite collection of the cosets in
\eqref{eq:cylindrical-algebra} generates a finite Boolean algebra.  Its
nonempty atoms are of the form
\[
A=P_Eg\setminus\bigcup_{\ell=1}^r P_{E_\ell}g_\ell,
\]
where the subcosets in the union are pairwise disjoint and properly contained
in $P_Eg$.  If a subcoset has larger $t$-depth than $P_Eg$, its measure has
strictly larger $X$-order by
\eqref{eq:doubled-borel-congruence-volume}.  At the same $t$-depth, pass to a
common deepest $u$-level.  Formula~\eqref{eq:same-depth-index} partitions the
parent into finitely many cosets of equal measure; because $A$ is nonempty,
the removed cosets are fewer than the full index.  It follows that
$\mu_0(A)$ has positive leading coefficient.  The same argument applies to
the complementary atom in $H_0$, whose leading term is $1$.

Now write a nonzero $f$ as a finite disjoint step function
$f=\sum_A a_A\mathbf1_A$ on these atoms.  Then
\[
\tau_C(f^\dagger f)
=
\sum_A a_A^\dagger a_A\,\mu_0(A).
\]
Every nonzero summand is positive in the ordered field
$\mathbb R((X))$, and at least one occurs.  This proves
\eqref{eq:faithful-positivity}.

Finally, the inverse image of $P_Eg/C$ in $X_D$ is $P_Eg/D$: indeed $C$ and
$D$ are normal in $H_0$ and are contained in $E$.  Both sides of
\eqref{eq:cylindrical-refinement-trace} are therefore the integral over
$H_0$ of the same function.
\end{proof}

A \emph{formal pre-Hilbert $\Lambda$-space} will mean a $\Lambda$-vector space
with a positive-definite Hermitian form taking diagonal values in
$\mathbb R((X))_{\ge0}$.  No Archimedean completion is understood.  We write
$\PreHilb^X_{\Lambda}$ for the category of such spaces and adjointable linear
maps.

For a parameter
$\alpha=(\alpha_1,\ldots,\alpha_n)\in(\Lambda^\times)^n$, define the
Hermitian-dual parameter
\[
\alpha^\vee
=
\bigl((\alpha_1^\dagger)^{-1},\ldots,
      (\alpha_n^\dagger)^{-1}\bigr).
\]
We call $\alpha$ \emph{unitary} if
\begin{equation}
\alpha_r^\dagger\alpha_r=1
\qquad(1\le r\le n).
\label{eq:unitary-parameter}
\end{equation}

For a $\Lambda$-vector space $V$, write $V^{\dagger\vee}$ for its
Hermitian algebraic dual, namely the $\Lambda$-space of maps
$\ell:V\to\Lambda$ satisfying $\ell(av)=a^\dagger\ell(v)$.

\begin{proposition}[Hermitian dual of the multiplier]
\label{prop:unitary-multiplier}
For all $b,b'\in B_H$,
\[
\bigl(\sigma_\alpha(b,b')^\dagger\bigr)^{-1}
=
\sigma_{\alpha^\vee}(b,b').
\]
Consequently fiberwise Hermitian duality and duality of the multiplicative
lines define a contravariant functor from the $\alpha$-twisted to the
$\alpha^\vee$-twisted equivariant category.  On the full subcategories of
objects with finite-dimensional, equivalently dualizable, fibers, this functor
is an anti-equivalence.  If $\alpha$ is unitary, then
$\alpha^\vee=\alpha$, and the standard lines
$L_\alpha(b)=\Lambda e_b$ carry Hermitian metrics with $\|e_b\|=1$ for
which every multiplication map
\[
m_{b,b'}:L_\alpha(b)\otimes L_\alpha(b')
\longrightarrow L_\alpha(bb')
\]
is unitary.
\end{proposition}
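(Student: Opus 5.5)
The identity for the multiplier is a direct computation, and I will carry it out first. Write $e_r=\nu_F\{x_r(b),y_r(b')\}\in\mathbb Z$. Since $\dagger$ is a ring involution of $\Lambda$ with $X^\dagger=X$, it is multiplicative and commutes with inversion and integer powers. Hence
\[
\bigl(\sigma_\alpha(b,b')^\dagger\bigr)^{-1}
=\prod_{r=1}^n\bigl((\alpha_r^\dagger)^{-e_r}\bigr)^{-1}
=\prod_{r=1}^n\bigl((\alpha_r^\dagger)^{-1}\bigr)^{-e_r}
=\sigma_{\alpha^\vee}(b,b').
\]
This uses only that $\nu_F$ is $\mathbb Z$-valued, which is Lemma~\ref{lem:detformula}.

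For the duality functor I will work in the twisted-descent model of Proposition~\ref{prop:equiv-descent}. The fiber of the dual line $L_\alpha(b)^{\dagger\vee}$ has basis $e_b^\ast$, determined by $e_b^\ast(e_b)=1$. Dualizing $m_{b,b'}$ gives a map $L_\alpha(bb')^{\dagger\vee}\to L_\alpha(b)^{\dagger\vee}\otimes L_\alpha(b')^{\dagger\vee}$ with coefficient $\sigma_\alpha(b,b')^\dagger$. Its inverse has coefficient $\sigma_{\alpha^\vee}(b,b')$, by the identity above. So the lines $b\mapsto L_\alpha(b)^{\dagger\vee}$, with the inverse-dual multiplications, are canonically the multiplicative line $L_{\alpha^\vee}$.

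Now take an $\alpha$-twisted object $(V_s,\theta_q(s))$. Define $V^\ast_s=V_s^{\dagger\vee}$ and set
\[
\theta^\ast_q(s)=\bigl(\theta_q(s)^{\dagger\vee}\bigr)^{-1},
\]
identified with a map $L_{\alpha^\vee}(q)\otimes V^\ast_s\to V^\ast_{qs}$. The twisted descent equation for $\theta^\ast$ is then the Hermitian dual of the one for $\theta$, with the order of composition reversed. The inverse corrects the variance back. On morphisms, a map $f$ is sent to $f^{\dagger\vee}$, which gives a contravariant functor. On objects with finite-dimensional fibers, the double-dual map $V_s\to V_s^{\dagger\vee\dagger\vee}$ is a natural isomorphism compatible with the $\theta$'s, so the functor is an anti-equivalence there. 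Finite-dimensional fibers are exactly the dualizable objects, since the category is pointwise as in Proposition~\ref{prop:fixed-groth}.

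For the unitary case, $\alpha_r^\dagger\alpha_r=1$ gives $(\alpha_r^\dagger)^{-1}=\alpha_r$, so $\alpha^\vee=\alpha$. Put the Hermitian form $\langle ae_b,ce_b\rangle=a^\dagger c$ on $L_\alpha(b)$. Then $m_{b,b'}$ sends the unit vector $e_b\otimes e_{b'}$ to $\sigma_\alpha(b,b')e_{bb'}$. This map is unitary iff $\sigma^\dagger\sigma=1$, and that holds because
\[
\sigma_\alpha(b,b')^\dagger\sigma_\alpha(b,b')=\bigl(\sigma_{\alpha^\vee}(b,b')\bigr)^{-1}\sigma_\alpha(b,b')=1
\]
by the identity with $\alpha^\vee=\alpha$.

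The only delicate step is the variance bookkeeping in the duality functor: checking that the inverse-dual of $\theta$ satisfies the descent equation with exactly $m^{\alpha^\vee}$, and not a $\sigma^{-1}$ or transposed version. I will settle this by computing the scalar coefficients in the standard bases $e_q$ and $e_q^\ast$.
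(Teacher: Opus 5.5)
Your proposal is correct and follows essentially the same route as the paper: the dagger-inverse computation of $\sigma_\alpha$, the inverse-dual descent maps on Hermitian dual fibers with the dual lines identified with $L_{\alpha^\vee}$, double-dual isomorphisms on finite-dimensional fibers, and $\sigma_\alpha^\dagger\sigma_\alpha=1$ in the unitary case. Your explicit coefficient check, that the inverse-dual descent maps satisfy the descent equation with exactly $m^{\alpha^\vee}$, fills in a step the paper states in one line.
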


\begin{proof}
Taking the dagger inverse of the defining formula gives
\[
\bigl(\sigma_\alpha(b,b')^\dagger\bigr)^{-1}
=
\prod_{r=1}^n
\bigl((\alpha_r^\dagger)^{-1}\bigr)^{-\nu_F\{x_r(b),y_r(b')\}}
=
\sigma_{\alpha^\vee}(b,b').
\]
Dualizing a twisted descent map and then inverting the resulting isomorphism
gives the descent map on the Hermitian dual.  The multiplication on the dual
lines has scalar $(\sigma_\alpha(b,b')^\dagger)^{-1}$, hence is the
$\alpha^\vee$-twisted multiplication by the displayed identity.  This gives
the stated contravariant functor.  On finite-dimensional fibers, the canonical
double-dual maps are isomorphisms, so the functor is an anti-equivalence.
Under \eqref{eq:unitary-parameter},
\[
\sigma_\alpha(b,b')^\dagger\sigma_\alpha(b,b')
=
\prod_{r=1}^n
(\alpha_r^\dagger\alpha_r)^{-\nu_F\{x_r(b),y_r(b')\}}
=1.
\]
Thus every multiplier value is dagger-unitary, so the displayed multiplication
maps preserve the standard metrics.
\end{proof}

\subsection{The spherical dagger category and its Hom-valued inner product}

Assume from now on that \eqref{eq:unitary-parameter} holds.  Let
\[
\calI_C(\alpha)
=
\Fun_{\sigma_\alpha}(B_H\ltimes S_C,\Vect_{\Lambda})
\]
be the fixed-level category, here we identify the two categories via the canonical equivalence 。.  Under the integral-picture equivalence
\[
Q_0\ltimes X_C\simeq B_H\ltimes O_C
\]
of Subsection~5.3, the spherical object $v_{\alpha,C}$ is the constant line on
$X_C$ with trivial $Q_0$-descent.  Hence
\[
\operatorname{Hom}_{\calI_C(\alpha)}
(v_{\alpha,C}^{\oplus r},v_{\alpha,C}^{\oplus s})
\cong
M_{s\times r}
\bigl(\operatorname{Fun}(X_C,\Lambda)^{Q_0}\bigr).
\]

\begin{definition}
	
Define an additive dagger subcategory
$\mathcal S_{C}^{\mathrm{cyl}}(\alpha)\subseteq
\calI_C(\alpha)$ as follows.  Its objects are
$v_{\alpha,C}^{\oplus r}$ for $r\ge0$, and
\begin{equation}
\operatorname{Hom}_{\mathcal S_C^{\mathrm{cyl}}(\alpha)}
\bigl(v_{\alpha,C}^{\oplus r},v_{\alpha,C}^{\oplus s}\bigr)
=
M_{s\times r}(\mathcal A_C^{\mathrm{cyl}}).
\label{eq:cylindrical-homs}
\end{equation}
Composition is pointwise matrix multiplication, and
$\Phi^\dagger(kC)=\Phi(kC)^\dagger$  denotes conjugate transpose.  
\end{definition}

The refinement
functors define a filtered pseudocolimit
\[
\mathcal S_F^{\mathrm{cyl},\uSm}(\alpha)
:=
\varinjlim_{C\in\calC_{H_0}}
\mathcal S_C^{\mathrm{cyl}}(\alpha),
\]
which, by the cofinality just proved and injectivity of pullback, maps
faithfully to
$\varinjlim_{C\in\calC_{H_0}}\calI_C(\alpha)$ and hence to the uniformly
smooth category.

\begin{definition}
For two objects $V,W \in  \mathcal S_C^{\mathrm{cyl}}(\alpha)$, we define the Hom-valued spherical inner product by
\begin{equation}
\underline{\langle V,W\rangle}^{(2)}_{C,\mu}
=
\operatorname{Hom}_{\mathcal S_C^{\mathrm{cyl}}(\alpha)}(V,W),
\label{eq:hom-valued-inner-product}
\end{equation}
equipped with the scalar Hermitian form
\begin{equation}
(\Phi,\Psi)_{C,\mu}
=
\tau_C\!\left(\operatorname{tr}(\Phi^\dagger\Psi)\right)
=
\int_{H_0}
\operatorname{tr}\!\left(\Phi(kC)^\dagger\Psi(kC)\right)
\,d\mu_0(k).
\label{eq:fesenko-hom-form}
\end{equation}
\end{definition}
For morphisms $a:V'\to V$ and $b:W\to W'$, its functorial map is
\[
\Phi\longmapsto b\Phi a:
\operatorname{Hom}(V,W)\longrightarrow\operatorname{Hom}(V',W').
\]
Thus the inner product first assigns a Hom-space to a pair of objects and then
uses the Fesenko--Morrow integral to make that Hom-space a formal
pre-Hilbert space.  This is the spherical analogue of a Hilbert enrichment for a unitary
$2$-representation; compare categorical inner products in
\cite{ganter2015inner}.

\begin{theorem}[General-rank spherical unitarization]
\label{thm:general-rank-spherical-unitarization}
For every $n\ge1$ and every unitary parameter $\alpha$, the forms
\eqref{eq:fesenko-hom-form} have the following properties.
\begin{enumerate}
\item They are positive-definite Hermitian forms.  In particular, no quotient
by a null radical is required, and
\[
\|\id_{v_{\alpha,C}}\|_{C,\mu}^2=1.
\]
\item The assignment \eqref{eq:hom-valued-inner-product} is a bifunctor
\[
\bigl(\mathcal S_C^{\mathrm{cyl}}(\alpha)\bigr)^{\op}
\times\mathcal S_C^{\mathrm{cyl}}(\alpha)
\longrightarrow\PreHilb^X_{\Lambda}.
\]
The dagger is antiunitary, and composition is adjointable.  More precisely,
whenever the compositions are defined,
\begin{align*}
(A\Phi,\Psi)_{C,\mu}
&=(\Phi,A^\dagger\Psi)_{C,\mu},\\
(\Phi B,\Psi)_{C,\mu}
&=(\Phi,\Psi B^\dagger)_{C,\mu}.
\end{align*}
\item If $D\subseteq C$, the refinement functor
\[
\pi_{D,C}^*:
\mathcal S_C^{\mathrm{cyl}}(\alpha)
\longrightarrow
\mathcal S_D^{\mathrm{cyl}}(\alpha)
\]
is an isometric dagger functor.  Consequently
$\mathcal S_F^{\mathrm{cyl},\uSm}(\alpha)$ carries a well-defined
Hom-valued formal $2$-inner product.
\item The restriction of the right-translation $2$-representation
$\rho_\alpha$ to $H_0$ preserves
$\mathcal S_F^{\mathrm{cyl},\uSm}(\alpha)$ and its inner product up to the
coherent right $H_0$-equivariant structure of $v_{\alpha,C}$ from
Proposition~\ref{prop:spherical-object}.  If
$\varphi_h:R^C_hv_{\alpha,C}\xrightarrow{\sim}v_{\alpha,C}$ denotes that
structure map, set
\[
U_h(\Phi)=\varphi_h^{\oplus s}\,R^C_h(\Phi)\,
(\varphi_h^{\oplus r})^{-1}
\qquad
(\Phi:v_{\alpha,C}^{\oplus r}\to v_{\alpha,C}^{\oplus s}).
\]
In the integral picture this is
\begin{equation*}
(U_h\Phi)(kC)=\Phi(khC),
\qquad h\in H_0,
\label{eq:k0-unitary-action}
\end{equation*}
and
\[
(U_h\Phi,U_h\Psi)_{C,\mu}=(\Phi,\Psi)_{C,\mu}.
\]
Thus the $H_0$-restriction of the spherical part of
$\rho_\alpha$ is a unitary $2$-representation in the Hom-enriched sense of
\eqref{eq:hom-valued-inner-product}.
\end{enumerate}
\end{theorem}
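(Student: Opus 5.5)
The plan is to reduce every assertion to Proposition~\ref{prop:positive-cylindrical-trace}, which already supplies a unital, $\dagger$-compatible, right-invariant, faithful positive trace $\tau_C$ on the commutative $\dagger$-algebra $\mathcal A_C^{\mathrm{cyl}}$, compatible with refinement. By \eqref{eq:cylindrical-homs}, a morphism is a matrix $\Phi=(\Phi_{ab})$ with entries in $\mathcal A_C^{\mathrm{cyl}}$. Hence \eqref{eq:fesenko-hom-form} expands as
\[
(\Phi,\Psi)_{C,\mu}=\sum_{a,b}\tau_C\bigl(\Phi_{ab}^\dagger\Psi_{ab}\bigr).
\]
This is a sum of scalar pairings $(f,g)\mapsto\tau_C(f^\dagger g)$, and $\mathcal A_C^{\mathrm{cyl}}$ is closed under products and $\dagger$.

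For (i), sesquilinearity is immediate from the definition. The Hermitian symmetry $(\Psi,\Phi)=(\Phi,\Psi)^\dagger$ follows from $\tau_C(g^\dagger f)=\tau_C((f^\dagger g)^\dagger)=\tau_C(f^\dagger g)^\dagger$, which uses $\dagger$-compatibility and commutativity. Positive-definiteness is \eqref{eq:faithful-positivity} applied entrywise: for $\Phi\neq0$ some entry is nonzero, and a sum of elements of $\mathbb R((X))_{\ge0}$ with one strictly positive summand is strictly positive, because the positive cone of the ordered field is closed under addition. The normalization is $\|\id\|^2=\tau_C(\mathbf 1_{X_C})=\mu_0(H_0)=1$.

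For (ii), I first check that $\mathcal A_C^{\mathrm{cyl}}$ is closed under matrix multiplication and conjugate transpose; this holds because it is an algebra stable under $\dagger$. The two adjointness identities then reduce to the cyclicity and transpose properties of the pointwise trace:
\[
\operatorname{tr}((A\Phi)^\dagger\Psi)=\operatorname{tr}(\Phi^\dagger A^\dagger\Psi),\qquad
\operatorname{tr}((\Phi B)^\dagger\Psi)=\operatorname{tr}(B^\dagger\Phi^\dagger\Psi)=\operatorname{tr}(\Phi^\dagger\Psi B^\dagger).
\]
The second equality uses cyclicity of $\operatorname{tr}$ with commutative entries, and we then apply $\tau_C$. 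Antiunitarity of $\dagger$ is Hermitian symmetry combined with $\operatorname{tr}(\Phi\Psi^\dagger)=\operatorname{tr}(\Psi^\dagger\Phi)$. Functoriality of $\Phi\mapsto b\Phi a$ is associativity, and adjointability of that map is exactly the two identities just proved. This gives the bifunctor into $\PreHilb^X_\Lambda$.

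For (iii), pullback $\pi_{D,C}^*$ acts entrywise, preserves products and $\dagger$, and is injective since $X_D\to X_C$ is surjective. Isometry is then \eqref{eq:cylindrical-refinement-trace} applied to $\operatorname{tr}(\Phi^\dagger\Psi)$. Using cofinality of $\calC_{H_0}$, the forms are compatible along the filtered system, so they define a form on the pseudocolimit Hom-spaces that is independent of the chosen representative level.

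For (iv), I expect the main obstacle to be the bookkeeping identifying $U_h$ with $\Phi\mapsto\Phi(\,\cdot\,h)$ in the integral picture. The structure maps $\varphi_h$ of Proposition~\ref{prop:spherical-object} are identities on the constant line over $X_C$, transported across $\iota_C$. Since $C$ is normal in $H_0$, $R_h^C$ preserves level $C$ and restricts on $X_C$ to $r_h$. I will conjugate by $\varphi_h$ and verify on a fiber $kC$ that the matrix entry becomes $\Phi(khC)$, noting that the $Q_0$-descent is trivial by Lemma~\ref{lem:integral-triviality}. Given this, $U_h$ preserves $\mathcal A_C^{\mathrm{cyl}}$ because right translation sends $P_Eg/C$ to $P_Egh/C$. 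Unitarity is the right-translation invariance of $\tau_C$, that is, the bi-invariance of $\mu_0$ from Theorem~\ref{thm:general-rank-fesenko-morrow}. Compatibility with composition and $\dagger$ holds because the operations are pointwise, and compatibility with refinement follows as in (iii).
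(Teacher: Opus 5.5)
Your proposal is correct and follows essentially the same route as the paper. Like the paper, you reduce each assertion entrywise to the faithful positive, $\dagger$-compatible, right-invariant, refinement-compatible trace $\tau_C$ of Proposition~\ref{prop:positive-cylindrical-trace}, and you get the adjointness and antiunitarity identities from pointwise trace cyclicity and $H_0$-unitarity from right invariance of $\mu_0$. One small precision point: as a function, $U_h\mathbf 1_{P_Eg/C}=\mathbf 1_{P_Egh^{-1}/C}$ (the set itself moves to $P_Egh/C$), which of course still lies in $\mathcal A_C^{\mathrm{cyl}}$.
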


\begin{proof}
For $\Phi\ne0$, at least one matrix entry $f$ is nonzero.  Pointwise,
$\operatorname{tr}(\Phi^\dagger\Phi)$ is the sum of the scalar functions
$f_{ab}^\dagger f_{ab}$.  By
Proposition~\ref{prop:positive-cylindrical-trace}, every term has
nonnegative integral and the chosen nonzero entry has strictly positive
integral.  Hence
\[
(\Phi,\Phi)_{C,\mu}>0.
\]
Hermitian symmetry and sesquilinearity are immediate.  The identity of the
spherical line is the constant function one, so its squared norm is
$\mu_0(H_0)=1$.

Postcomposition and precomposition give the two functorial variables in
\eqref{eq:hom-valued-inner-product}.  The first adjointness identity follows
pointwise from $(A\Phi)^\dagger=\Phi^\dagger A^\dagger$.  The second also
uses the ordinary finite-matrix identity
$\operatorname{tr}(XY)=\operatorname{tr}(YX)$ for rectangular products.
Thus the induced linear maps are adjointable.  Moreover, cyclicity of the
finite matrix trace gives directly
\[
(\Phi^\dagger,\Psi^\dagger)_{\underline{\langle W,V\rangle}^{(2)}_{C,\mu}}
=
(\Psi,\Phi)_{\underline{\langle V,W\rangle}^{(2)}_{C,\mu}},
\]
so dagger is antiunitary.

The refinement assertion follows from
Proposition~\ref{prop:positive-cylindrical-trace} entry by entry: both norms
are the integral over $H_0$ of the same matrix coefficient function.  The
pullback is injective because $X_D\to X_C$ is surjective, so the isometric
direct limit remains positive-definite.

Every $C\in\calC_{H_0}$ is normal in $H_0$.  Hence right translation by
$h\in H_0$ stays at level $C$, preserves the cylindrical algebra, and acts as
in \eqref{eq:k0-unitary-action}.  Right invariance of $\mu_0$ gives the
last displayed isometry, and pointwise conjugate transpose gives
$U_h(\Phi^\dagger)=U_h(\Phi)^\dagger$.  The coherence maps of the spherical
$H_0$-equivariant structure are identities in the integral picture.  Hence
$U_{h_1}U_{h_2}=U_{h_1h_2}$, and these coherence maps are unitary and
compatible with the strict multiplication law of the original
right-translation action.
\end{proof}

\subsection{Explicit orthogonal vectors and the Waller specialization}

The preceding theorem contains explicit orthogonal systems.  Let
$E=C_{a,b}^{\mathrm{dbl}}$ contain the fixed level $C$.  For right cosets of
$P_E$ in $H_0$ one has
\begin{equation}
\left(
\mathbf1_{P_Eg/C},\mathbf1_{P_Eg'/C}
\right)_{C,\mu}
=
\begin{cases}
\mu_0(P_E),&P_Eg=P_Eg',\\
0,&P_Eg\ne P_Eg'.
\end{cases}
\label{eq:orthogonal-coset-vectors}
\end{equation}
By \eqref{eq:doubled-borel-congruence-volume}, the vectors
\begin{equation}
e_{E,g}
=
c_{n,q}^{-1}(1-q^{-1})^{-n}
q^{m_na}X^{-m_nb}\,
\mathbf1_{P_Eg/C}
\label{eq:normalized-spherical-cosets}
\end{equation}
have norm one.  Thus the two valuation directions appear separately: the
$u$-depth contributes a power of $q$, while the $t$-depth contributes a power
of the infinitesimal $X$.

There is also an ambient, not necessarily $Q_0$-invariant, normalized
congruence vector in the finite-step function space on $H_0$:
\begin{equation}
e_{i,j}^{\mathrm{dbl}}
=
c_{n,q}^{-1}q^{n^2i}X^{-n^2j}
\mathbf1_{C_{i,j}^{\mathrm{dbl}}}.
\label{eq:normalized-general-congruence}
\end{equation}
Its norm is one by \eqref{eq:doubled-congruence-volume}.  The distinction is
important: \eqref{eq:normalized-spherical-cosets} is an endomorphism of a
spherical object, whereas \eqref{eq:normalized-general-congruence} belongs to
the ambient $H_0$-function space.

For $n=2$,
\[
c_{2,q}
=
\frac{q^3}{(q^2-1)(q-1)}.
\]
Consequently \eqref{eq:general-congruence-volume} becomes
\[
\mu_2(K_{i,j})
=
\frac{q^3}{(q^2-1)(q-1)}q^{-4i}X^{4j},
\]
which is exactly Waller's normalization.  Waller also proves directly in rank
two that the resulting measure is both left and right translation invariant
and that his coordinate integral agrees, up to the normalization constant,
with Morrow's integral \cite[Appendix~A]{waller2019measure}.  Thus the
rank-two formula is not a separate assumption but the first nontrivial
instance of Theorem~\ref{thm:general-rank-fesenko-morrow}.

\bibliographystyle{alpha}
\bibliography{myreferences}
\end{document}